
\documentclass[11pt]{article}
\usepackage{amssymb}
\usepackage{amsmath}
\usepackage{amsthm}
\usepackage{subfigure}
\usepackage{verbatim}
\usepackage{geometry}
\usepackage{fancyhdr}
\usepackage{enumerate}
\usepackage{graphicx}
\usepackage{amsfonts}
\usepackage{tabularx}
\usepackage{longtable}
\usepackage{multirow}
\setcounter{MaxMatrixCols}{10}

\geometry{a4paper,tmargin=2cm,bmargin=2cm,
lmargin=2cm,rmargin=2cm}

\newcommand{\floor}[1]{{\left\lfloor #1 \right\rfloor}}
\newtheorem{theorem}{Theorem}

\newtheorem{definition}[theorem]{Definition}

\theoremstyle{definition}
\newtheorem{remark}[theorem]{Remark}

\begin{document}
\title{New  bounds of degree-based topological indices for  some classes of $c$-cyclic graphs }
\author{\emph{Monica Bianchi $^{1}$} \thanks{%
e-mail: monica.bianchi@unicatt.it} \and \emph{Alessandra Cornaro $^{1}$}
\thanks{%
e-mail: alessandra.cornaro@unicatt.it} \and \emph{Jos\'{e} Luis Palacios $%
^{2}$} \thanks{%
e-mail: jopalal@gmail.com} \and \emph{Anna Torriero$^{1}$} \thanks{%
e-mail: anna.torriero@unicatt.it} }
\date{\today}

\maketitle

\centerline{$^{{}^1}$ Department of Mathematics and Econometrics,  Catholic University, Milan,
Italy.}

\centerline{$^{{}^2}$ Department of Electrical and Computer Engineering,}
\centerline{University of New Mexico, Albuquerque, New Mexico.}

\begin{abstract}

Making use of a majorization technique for a suitable class of graphs,  we derive upper and
lower bounds for some topological indices depending on the degree sequence
over all vertices,  namely the first general
Zagreb index and the first multiplicative Zagreb index. Specifically, after
characterizing $c-$cyclic graphs $(0\leq c\leq 6)$ as those whose degree
sequence belongs to particular subsets of $\mathbb{R}^{n}$, we identify the
maximal and minimal vectors of these subsets with respect to the
majorization order. This technique allows us to determine lower and upper
bounds of the above indices recovering  those existing in the literature
as well obtaining new ones.

\noindent
\textbf{Key Words}: Majorization, Schur-convex functions, $c-$cyclic graphs,
Zagreb indices.

\noindent
\textbf{AMS Classification}: 05C35, 05C50
\end{abstract}

\section{Introduction}

\bigskip\ Many topological indices in Mathematical Chemistry are based on
the degree sequence of a finite graph $G=(V,E)$ over all vertices. One of the most famous
among these is the \textit{first Zagreb index }defined as $M_{1}(G)=\underset%
{i=1}{\overset{n}{\sum }}d_{i}^{2}$ where $d_{i}$ $(i=1,...,n)$ stands for
the degree of the vertex $i$  and $n=\left\vert V\right\vert $\ (see \cite{Gut72}, \cite{Gut75}, \cite{Nikolic}).
The notion of $M_{1}(G)$ was extended by Li and
Zheng \cite{LiZheng} as the \textit{first general Zagreb index} $%
M_{1}^{\alpha }(G)=\underset{i=1}{\overset{n}{\sum }}d_{i}^{\alpha }$,
for $\alpha $ an arbitrary real number different from $0$ and $1,$ that
coincides \ with the zeroth-order general Rand\'{\i}c index (see \cite{LISHI08}). For $\alpha =2$ we recover the first Zagreb index  while
for $\alpha =-1$ we get the \textit{inverse degree }$\rho \left( G\right) =$ $%
M_{1}^{-1}=\underset{j=1}{\overset{n}{\sum }}\dfrac{1}{d_{j}}$ which has
generated increased attention motivated by conjectures of the computer
program Graffiti (see  \cite{Fajtlowicz}).

In this paper we are concerned precisely with
those indices depending on the degree sequence over all vertices of $G$,  for
which we adopt a unified approach aimed to determine new lower and upper
bounds. This fruitful methodology, synthetically introduced  in Section 2, is  based on the majorization order and Schur-convexity (\cite {Marshall}), and has already been used by some of the authors (\cite{BT} and \cite{Grassi}) in other contexts,  as well as for localizing some  relevant topological indicators of a graph (\cite{BCT1}, \cite{BCT2}, \cite{BCPT1} and \cite{BCPT2} ), which is also the aim of the present article.
We restrict our attention to a particular class of graphs, the $c$-cyclic graphs
for $0\leq c\leq 6$, which contain exactly $c$ independent cycles (i.e., cycles that do not contain other cycles within themselves). In Section 3 we provide a new characterization of  $c$-cyclic
graphs,  needed to determine their extremal degree sequences with respect to the majorization order discussed in Section  4.  In Section 5 we determine  upper and lower bounds for some degree-based topological indices. Section  6 concludes with a summary and some final comments.

\section{Notations and preliminaries results on majorization}
In this section we recall some basic notions on majorization, referring for more details  to  \cite{BCT1} and \cite{Marshall}. In the sequel we denote
by $[x_1^{\alpha_1}, x_2^{\alpha_1}, \cdots, x_p^{\alpha_p}]$ a vector in $\mathbb{R}^{n}$ with $\alpha_i$ components equal to $x_i$, where $\sum_{i=1}^p \alpha_i=n$. If $\alpha_i=1$ we use for convenience $x_i$ instead of $x_i^{1}$, while $x_i^0$ means that the component $x_i$ is not present.

\begin{definition}
Given two vectors $\mathbf{y}$, $\mathbf{z\in }$ $D=\{\mathbf{x}\in \mathbb{R%
}^{n}:x_{1}\geq x_{2}\geq ...\geq x_{n}\}$, the majorization order $\mathbf{y%
}\trianglelefteq \mathbf{z}$ means:
\begin{equation*}
\begin{cases}
\left\langle \mathbf{y},\mathbf{s}^{\mathbf{k}}\right\rangle \leq
\left\langle \mathbf{z},\mathbf{s}^{\mathbf{k}}\right\rangle ,\text{ }%
k=1,...,(n-1) \\[3mm]
\left\langle \mathbf{y},\mathbf{s}^{\mathbf{n}}\right\rangle =\left\langle
\mathbf{z},\mathbf{s}^{\mathbf{n}}\right\rangle
\end{cases}
\end{equation*}
where $\left\langle \cdot ,\cdot \right\rangle $ is the inner product in $
\mathbb{R}^{n}$ and
$\mathbf{s^{j}}=[1^j,0^{n-j}],\quad j=1,2,\cdots ,n.$
\end{definition}

In what follows we will consider some subsets of
\begin{equation*}
\Sigma _{a}=D\cap \{\mathbf{x}\in \mathbb{R}_{+}^{n}:\left\langle \mathbf{x},
\mathbf{s^{n}}\right\rangle =a\},
\end{equation*}
where $a$ is a positive real number.  Given a closed subset $S\subseteq \Sigma
_{a} $, a vector $\mathbf{x}^{\ast }(S)\in S$ is said to be maximal for $S$
with respect to the majorization order if $\mathbf{x}\trianglelefteq \mathbf{
x}^{\ast }(S)$ for each $\mathbf{x}\in S$. Analogously, a vector $\mathbf{x}
_{\ast }(S)\in S$ is said to be minimal for $S$ with respect to the
majorization order if $\mathbf{x}_{\ast }(S)\trianglelefteq \mathbf{x}$ for
each $\mathbf{x}\in S$. Notice that if $S\subseteq T$, then $\mathbf{x^{\ast }} (S)\trianglelefteq \mathbf{
x}^{\ast }(T)$ and $\mathbf{x}_{\ast }(T) \trianglelefteq \mathbf{x}_{\ast }(S)$.

\noindent In \cite{BCT1} some of the authors derived the maximal and minimal
elements, with respect to the majorization order, of the set
\begin{equation}
S_{a}=\Sigma _{a}\cap \left\{ \mathbf{x\in }\text{ }%
\mathbb{R}
^{n}:M_{i}\geq x_{i}\geq m_{i},\text{ }i=1,...n\right\} ,  \label{Sa}
\end{equation}
where $\mathbf{m}=\left[ m_{1},m_{2},...,m_{n}\right]$ and $\mathbf{M}=%
\left[ M_{1},M_{2},...,M_{n}\right]$ are two fixed vectors arranged
in nonincreasing order with $0\leq m_{i}\leq M_{i},$ for all $i=1,...n,$ and
$a$ is a positive real number such that $\left\langle \mathbf{m},\mathbf{s}^{%
\mathbf{n}}\right\rangle \leq a\leq \left\langle \mathbf{M},\mathbf{s}^{%
\mathbf{n}}\right\rangle .$
For the sake of completeness we recall the main results we will use in Section 4. We start discussing the maximal element. Let $\mathbf{v^j}=[0^j,1^{n-j}]$, $j=0, \cdots, n$.
\begin{theorem}\label{th:massimo}
Let $k\geq 0$ be the smallest integer such that
\begin{equation}
\left\langle \mathbf{M},\mathbf{s}^{\mathbf{k}}\right\rangle +\left\langle
\mathbf{m},\mathbf{v}^{\mathbf{k}}\right\rangle \leq a<\left\langle \mathbf{M%
},\mathbf{s}^{k+1}\right\rangle +\left\langle \mathbf{m},\mathbf{v}^{\mathbf{%
k+1}}\right\rangle ,  \label{dis1}
\end{equation}%
and $\theta =a-\left\langle \mathbf{M},\mathbf{s}^{\mathbf{k}}\right\rangle
-\left\langle \mathbf{m},\mathbf{v}^{\mathbf{k+1}}\right\rangle .$ Then
\begin{equation}\label{xsa}
\mathbf{x}^{\ast }(S_{a})=[M_1,M_2,\cdots, M_k, \theta, m_{k+2}, \cdots m_n].
\end{equation}
\end{theorem}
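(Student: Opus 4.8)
The plan is to prove the statement in three stages---well-definedness of $k$, feasibility of $\mathbf{x}^{\ast}(S_a)$, and the majorization inequalities---using throughout that, since $\mathbf{x}$ and $\mathbf{x}^{\ast}(S_a)$ share the component sum $a$, the relation $\mathbf{x}\trianglelefteq\mathbf{x}^{\ast}(S_a)$ reduces to checking the partial-sum inequalities $\langle\mathbf{x},\mathbf{s}^j\rangle\le\langle\mathbf{x}^{\ast}(S_a),\mathbf{s}^j\rangle$ for $j=1,\dots,n-1$. For the first stage I would set $f(k)=\langle\mathbf{M},\mathbf{s}^k\rangle+\langle\mathbf{m},\mathbf{v}^k\rangle=\sum_{i=1}^k M_i+\sum_{i=k+1}^n m_i$ and observe that $f(k+1)-f(k)=M_{k+1}-m_{k+1}\ge 0$, so $f$ is nondecreasing on $\{0,\dots,n\}$ with $f(0)=\langle\mathbf{m},\mathbf{s}^n\rangle$ and $f(n)=\langle\mathbf{M},\mathbf{s}^n\rangle$. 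The hypothesis $\langle\mathbf{m},\mathbf{s}^n\rangle\le a\le\langle\mathbf{M},\mathbf{s}^n\rangle$ then guarantees that an index $k$ satisfying (\ref{dis1}) exists (and, by monotonicity, is unique) whenever $a<\langle\mathbf{M},\mathbf{s}^n\rangle$, the extreme value $a=\langle\mathbf{M},\mathbf{s}^n\rangle$ giving simply $\mathbf{x}^{\ast}(S_a)=\mathbf{M}$.

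Next I would verify $\mathbf{x}^{\ast}(S_a)\in S_a$. The definition of $\theta$ makes the component sum equal $a$ by construction, so $\langle\mathbf{x}^{\ast}(S_a),\mathbf{s}^n\rangle=a$. Rewriting the two halves of (\ref{dis1}) yields $m_{k+1}\le\theta<M_{k+1}$; combined with the monotonicity of $\mathbf{M}$ and $\mathbf{m}$ this delivers both the nonincreasing ordering $M_k\ge\theta\ge m_{k+2}$ (so that $\mathbf{x}^{\ast}(S_a)\in D$) and the box constraints $m_i\le x_i^{\ast}\le M_i$ at every coordinate, the only nonobvious one being position $k+1$, where the requirement is exactly $m_{k+1}\le\theta\le M_{k+1}$.

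Finally, for an arbitrary $\mathbf{x}\in S_a$ I would bound the partial sums by splitting at $k$. For $j\le k$ the coordinates of $\mathbf{x}^{\ast}(S_a)$ are the upper bounds, so $\langle\mathbf{x},\mathbf{s}^j\rangle=\sum_{i=1}^j x_i\le\sum_{i=1}^j M_i=\langle\mathbf{x}^{\ast}(S_a),\mathbf{s}^j\rangle$ by $x_i\le M_i$. For $j\ge k+1$ every tail index $i>j$ satisfies $i\ge k+2$, hence $x_i^{\ast}=m_i$; writing each partial sum as $a$ minus its tail and using $x_i\ge m_i$ gives $\langle\mathbf{x},\mathbf{s}^j\rangle=a-\sum_{i>j}x_i\le a-\sum_{i>j}m_i=\langle\mathbf{x}^{\ast}(S_a),\mathbf{s}^j\rangle$. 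Together with the equality at $j=n$ this is precisely $\mathbf{x}\trianglelefteq\mathbf{x}^{\ast}(S_a)$, which, with feasibility, proves maximality.

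I expect the genuine content to be the realization that the binding constraints switch at $k$: to the left the upper profile $\mathbf{M}$ controls the partial sums, while to the right it is, in complementary form, the lower profile $\mathbf{m}$ on the tail. Once this split is identified the estimates are immediate, so the main obstacle is conceptual bookkeeping rather than any delicate inequality---specifically, confirming that the single boundary coordinate $\theta$ lands in $[m_{k+1},M_{k+1})$, which is what makes $\mathbf{x}^{\ast}(S_a)$ both feasible and simultaneously extremal in every partial sum.
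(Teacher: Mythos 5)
Your argument is correct and complete: the monotonicity of $f(k)=\langle\mathbf{M},\mathbf{s}^{k}\rangle+\langle\mathbf{m},\mathbf{v}^{k}\rangle$ gives existence and uniqueness of $k$, the two halves of \eqref{dis1} give exactly $m_{k+1}\le\theta<M_{k+1}$ (hence feasibility and the ordering $M_{k}\ge\theta\ge m_{k+2}$), and the partial-sum estimates split at $k$ (upper bounds $M_i$ on the head for $j\le k$, lower bounds $m_i$ on the tail for $j\ge k+1$) establish $\mathbf{x}\trianglelefteq\mathbf{x}^{\ast}(S_a)$. Note that this paper does not actually prove Theorem~\ref{th:massimo} but only recalls it from \cite{BCT1}; the proof there proceeds by essentially the same feasibility-plus-partial-sums argument, so your route matches the source.
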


From this general result, the maximal element of particular
subsets of $S_{a}$ can be deduced. In what follows we will often focus on sets of the type
\begin{equation*}
S_{a}^{\left[ h\right] }=
\Sigma _{a}\cap \left\{ \mathbf{x}\in \mathbb{R}^{n}:M_{1}\geq x_{1}\geq
...\geq x_{h}\geq m_{1},\right.
\left. M_{2}\geq x_{h+1}\geq ...\geq x_{n}\geq m_{2}\right\}
\end{equation*}
where $1\leq h\leq n$, $0\leq m_{2}\leq m_{1}$, $0\leq M_{2}\leq M_{1}$, $m_{i}<M_{i},i=1,2$
and $$hm_{1}+(n-h)m_{2}\leq a\leq hM_{1}+(n-h)M_{2}.$$

In this case,  given $a^{\ast }=hM_{1}+(n-h)m_{2}$, let
\begin{equation*}
k=\left\{
\begin{array}{ccc}
\Big\lfloor {\dfrac{a-h(m_{1}-m_{2})-nm_{2}}{M_{1}-m_{1}}} \Big\rfloor &
\text{ if } & a<a^{\ast } \\
&  &  \\
\Big\lfloor {\dfrac{a-h(M_{1}-M_{2})-nm_{2}}{M_{2}-m_{2}}} \Big\rfloor &
\text{ if } & a\geq a^{\ast }
\end{array}
\right.
\end{equation*}
where $\floor{x}$ denote the integer part of the real number $x$. In Corollary 3 in \cite{BCT1} it has been   shown that
\begin{equation*}
\mathbf{x}^{\ast }(S_{a}^{\left[ h\right] })=
\left\{
\begin{array}{ccc}
\left [M_1^k, \theta, m_1^{h-k-1},m_2^{n-h} \right]  & \text{ if } & a<a^{\ast } \\
&  &  \\
\left [ M_1^h,M_2^{k-h}, \theta, m_2^{n-k-1} \right ] & \text{ if } & a\geq a^{\ast }%
\end{array}%
\right.
\end{equation*}
where $\theta$ is evaluated in order to entail $\mathbf{x}^{\ast }(S_{a}^{\left[ h\right] }) \in \Sigma_a$.

The computation of the minimal element of the set $S_a$ is more tangled. The minimal element of $\Sigma_a$ is $x_*(\Sigma_a)= [(\frac an)^n]$. If it belongs to $S_a$ then it is its  minimal element, too. Otherwise we will use the following theorem
\begin{theorem}
\label{th:minimo}  Let $k \ge 0$ and $d \ge 0$ be the smallest
integers such that
\begin{itemize}
\item[1)] $k + d <n $
\item[2)] $m_{k+1} \le \rho \le M_{n-d}$ where $\rho= \dfrac { a - \langle
\mathbf{m},\mathbf{s^k} \rangle - \langle \mathbf{M}, \mathbf{v^{n-d}}
\rangle} {n-k-d}$.
\end{itemize}
Then
\begin{equation*}
\mathbf{x_{\ast }}(S_{a})=[m_1, \cdots , m_k, \rho^{n-d-k}, M_{n-d+1} \cdots, M_n].
\end{equation*}
\end{theorem}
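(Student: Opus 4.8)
The plan is to verify directly that the candidate vector $\mathbf{x_*} := [m_1,\dots,m_k,\rho^{n-d-k},M_{n-d+1},\dots,M_n]$ lies in $S_a$ and then that it sits below every element of $S_a$ in the majorization order, i.e. that $\langle \mathbf{x_*},\mathbf{s}^{j}\rangle \le \langle \mathbf{x},\mathbf{s}^{j}\rangle$ for each $j=1,\dots,n-1$ and every $\mathbf{x}\in S_a$ (the equality at $j=n$ being automatic, as both vectors sum to $a$). By the definition of $\trianglelefteq$ this is exactly $\mathbf{x_*}\trianglelefteq\mathbf{x}$, which makes $\mathbf{x_*}$ the minimal element.

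First I would check feasibility, $\mathbf{x_*}\in S_a$. The total sum equals $a$ by the very definition of $\rho$, since $(n-k-d)\rho = a - \langle\mathbf{m},\mathbf{s}^{k}\rangle-\langle\mathbf{M},\mathbf{v}^{n-d}\rangle$. Membership in the box is where condition 2) enters: because $\mathbf m$ and $\mathbf M$ are nonincreasing, $m_{k+1}\le\rho$ yields $\rho\ge m_i$ for every middle index $k<i\le n-d$, while $\rho\le M_{n-d}$ yields $\rho\le M_i$ there; the two clamped blocks lie in the box trivially since $m_i\le M_i$. It remains to see that $\mathbf{x_*}\in D$, i.e. that the three blocks fit together in nonincreasing order, which reduces to the junction inequalities $m_k\ge\rho\ge M_{n-d+1}$. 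This is exactly where I would invoke that $(k,d)$ is the \emph{smallest} pair satisfying 1)--2): lowering $k$ (resp. $d$) would raise (resp. lower) the redistributed average $\rho$ past the corresponding box bound, and unwinding those failures produces the required junction inequalities.

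For minimality, fix $\mathbf x\in S_a$ and set $g(j):=\langle\mathbf x,\mathbf{s}^{j}\rangle-\langle\mathbf{x_*},\mathbf{s}^{j}\rangle$; I must show $g(j)\ge0$ for all $j$. For $0\le j\le k$ the first $j$ entries of $\mathbf{x_*}$ are the minima $m_i$, so $g(j)=\sum_{i=1}^j(x_i-m_i)\ge0$. For $n-d\le j\le n$ I pass to complementary sums: since $\langle\mathbf{x_*},\mathbf{s}^{j}\rangle=a-\sum_{i>j}M_i$ in this range, and $\langle\mathbf x,\mathbf{s}^{j}\rangle=a-\sum_{i>j}x_i$ with $x_i\le M_i$, one gets $g(j)=\sum_{i>j}(M_i-x_i)\ge0$. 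The heart of the matter is the middle range $k<j<n-d$, where $(x_*)_j=\rho$ is constant, so the increments satisfy $g(j)-g(j-1)=x_j-\rho$. As $\mathbf x$ is nonincreasing, $x_j-\rho$ is nonincreasing in $j$, so $g$ is \emph{discretely concave} on $\{k,\dots,n-d\}$; since $g(k)\ge0$ and $g(n-d)\ge0$ by the two boundary cases already settled, concavity forces $g(j)\ge0$ throughout the interval.

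The conceptual core, the middle range, is dispatched cleanly by this concavity observation, so I expect the real difficulty to lie in the feasibility step: proving that the jointly-minimal choice of the pair $(k,d)$ genuinely forces $m_k\ge\rho\ge M_{n-d+1}$, and, upstream, that such a pair exists whenever the flat vector $[(\tfrac an)^n]$ fails to belong to $S_a$ (the case $k=d=0$, $\rho=\tfrac an$, covering the situation in which it does). I would handle this by a careful monotonicity analysis of $\rho$ as a function of $k$ and of $d$ separately, treating each clamped block in turn, which is where the bookkeeping is heaviest.
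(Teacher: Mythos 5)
The paper itself does not prove this theorem: it is recalled verbatim from \cite{BCT1}, so there is no in-paper argument to match yours against. Judged on its own terms, your proposal is sound and its core is complete. The minimality half is fully argued and correct: with $g(j)=\langle\mathbf{x},\mathbf{s^j}\rangle-\langle\mathbf{x_*},\mathbf{s^j}\rangle$ you get $g\ge 0$ on $\{0,\dots,k\}$ from $x_i\ge m_i$, $g\ge 0$ on $\{n-d,\dots,n\}$ from the complementary sums and $x_i\le M_i$, and on the middle block the increments $g(j)-g(j-1)=x_j-\rho$ are nonincreasing, so $g$ is discretely concave there and nonnegativity at the two endpoints propagates to the whole interval. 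This is clean and is essentially the standard route.

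The one place you leave a sketch is the junction inequalities $m_k\ge\rho\ge M_{n-d+1}$, and your stated mechanism (``lowering $k$ raises $\rho$ past the bound'') is not quite the right way to phrase it: the direction in which $\rho$ moves is irrelevant. What makes the step work is the identity
\begin{equation*}
\rho'=\frac{(n-k-d)\,\rho+m_k}{\,n-k-d+1\,},
\end{equation*}
where $\rho'$ is the value associated with the pair $(k-1,d)$: it exhibits $\rho'$ as a strict convex combination of $\rho$ and $m_k$. Since $(k-1,d)$ must violate condition 2), either $\rho'<m_k$, which forces $\rho<m_k$ (a point $\ge m_k$ averaged with $m_k$ cannot drop below $m_k$), or $\rho'>M_{n-d}\ge\rho$, which forces $m_k>\rho'>\rho$; in both cases $m_k\ge\rho$. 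The symmetric identity $\rho''=\bigl((n-k-d)\rho+M_{n-d+1}\bigr)/(n-k-d+1)$ for the pair $(k,d-1)$ gives $\rho\ge M_{n-d+1}$ the same way. With that two-line computation inserted, your feasibility step closes, and the remaining issue you flag (existence of the minimal pair $(k,d)$ under $\langle\mathbf{m},\mathbf{s^n}\rangle\le a\le\langle\mathbf{M},\mathbf{s^n}\rangle$) is a hypothesis of the statement rather than something the theorem asks you to establish. So: no wrong step, one genuinely missing computation, and it is exactly the one you identified as the heaviest bookkeeping.
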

For the set $S_a^{[h]}$ we can express the minimal element in a more accessible way (see Corollary 10 in \cite{BCT1}). We recall only the expression for the case $m_1 \le M_2$, since it is the only one  we need in the sequel:


\begin{equation}\label{m_1 le M_2}
\mathbf{x_{\ast }}(S_{a}^{\left[ h\right] })=\left\{
\begin{array}{ccc}
\left [(\frac{a}{n})^n \right ] & \text{ if } & m_{1}\leq \dfrac{a}{n}\leq M_{2}
\\
& & \\
\left [m_{1}^h,\left (\frac{a-hm_{1}}{n-h} \right )^{n-h}\right ] & \text{ if } &
\dfrac{a}{n}<m_{1} \\
& & \\
\left [\left (\frac{a-M_{2}(n-h)}{h} \right )^h, M_{2}^{n-h} \right ] & \text{ if } &
\dfrac{a}{n}>M_{2} \\
&  &
\end{array}%
\right.
\end{equation}

Notice that  the minimal element of the set $
S_{a}$ does not necessarily have integer components,
while this is not the case for the maximal element. For our purposes
it is crucial to find the minimal vector in $S_{a}$
with integer components which can be  constructed by the following procedure (see  Remark 12 in \cite{BCT1}).
Let us consider, for instance,  the vector $x_{\ast }(S_{a}^{\left[ h\right] })=%
\left [ \left (\dfrac{a}{n} \right )^n \right ] $ which corresponds to the case $m_{1}\leq \dfrac{a%
}{n}\leq M_{2}$. If $\dfrac{a}{n}$ is not an
integer, we will find the index $k$, $1\leq k\leq n$, such that
\begin{equation*}
\left(\Big\lfloor{ \dfrac{a}{n}}\Big\rfloor +1\right)k+\Big\lfloor{ \dfrac{a}{n}}\Big\rfloor (n-k)=a
\end{equation*}
i.e., $k=a-\Big\lfloor{ \dfrac{a}{n}}\Big\rfloor\ n$. The vector
\begin{equation*}
\mathbf{x_{\ast }^{1}}=\left [ \left (\Big\lfloor{ \dfrac{a}{n}}\Big\rfloor +1\right )^k, \left( \Big\lfloor{ \dfrac{a}{n}}\Big\rfloor \right )^{n-k} \right]
\end{equation*}%
is the minimal element of $S_{a}^{\left[ h\right] }$ with integer components. With slight modifications the same procedure can be applied also to the other cases discussed in \eqref{m_1 le M_2} of Theorem \ref{th:minimo}.

\section{ A new characterization of $c$-cyclic graphs}

Let $G=(V,E)$  a simple, connected, undirected graph with order $|V|=n$  and size $|E|=m$. Denote by $\pi = (d_1,d_2, \cdots, d_n)$ the degree sequence of $G$, where $d_i$ is the degree of  the vertex $i$, arranged in nonincreasing order $d_1 \ge d_2 \ge \cdots \ge d_n$. It is well known that $$\sum {d_i}=2m, \,\, \text{ and } d_1 \leq n-1 \le m. $$

The cyclomatic number $c$ of a graph $G$ is given by $c=m-n+1$. It corresponds to the number of independent cycles in $G$, i.e. cycles that do not contain other cycles   (see \cite{BOL90}). In particular, graphs  with cyclomatic number $c=0$ are  trees, graphs  with cyclomatic number $c=1$ are unicyclic graphs and, more generally, graphs  with cyclomatic number $c$ are  $c$-cyclic graphs. In this Section we will deal with graphs having a cyclomatic number $c \le 6$.

Schocker in \cite{Scho}  gave a characterization of the degree sequences of c-cyclic graphs (see also \cite{Gut89},  \cite{Tome2008},\cite{Zhu2011},  \cite{Zhang2006}). For the sake of completeness we recall his results for the case $0\le c \le 6$ (see Theorem 2.4 and Corollary 2.5 in \cite{Scho}).

\begin{theorem} \label{Schocker}
The integers $(n-1) \ge d_1 \ge d_2 \ge \cdots \ge d_n $ are the vertex degree sequence of
\begin{enumerate}[i.]
\item a tree ($c=0$) if and only if $m \ge 1$, $n=m+1$;
\item a unicyclic graph ($c=1$) if and only if $m \ge 3$, $n=m$,  $d_1+d_2 \le n+1$;
\item a bicyclic graph ($c=2$) if and only if $m \ge 5$, $n=m-1$,  $d_1+d_2 \le n+2$, $d_1+d_2+d_3 \le n+4$;
\item a tricyclic graph  ($c=3$) if and only if $m \ge 6$, $n=m-2$,
$d_1+d_2 \le n+3$ and $d_1+d_2+d_3 \le n+5$;
   \item a tetracyclic  graph ($c=4$) if and only if $m\geq 8$, $n=m-3$,  $d_{1}+d_{2}\leq n+4$ , $d_{1}+d_{2}+d_{3}\leq n+6$ and $%
d_{1}+d_{2}+d_{3}+d_{4}\leq n+9$;
\item a pentacyclic graph ($c=5$) if and only if $m\geq 9$, $n=m-4$,  $d_{1}+d_{2}\leq n+5$ , $d_{1}+d_{2}+d_{3}\leq n+7$, $%
d_{1}+d_{2}+d_{3}+d_{4}\leq n+10,\,\, 2d_{1}+2d_{2}+d_{3}+d_{4}+d_{5}\leq 2n+16.$
\item a hexacyclic graph ($c=6$) if and only if $m\geq 10$, $n=m-5$,  $d_{1}+d_{2}\leq n+6$ , $d_{1}+d_{2}+d_{3}\leq n+8$, $%
d_{1}+d_{2}+d_{3}+d_{4}\leq n+11$,\,\, $2d_{1}+2d_{2}+d_{3}+d_{4}+d_{5}\leq 2n+18$,\,\,  $2d_{1}+2d_{2}+d_{3}+d_{4}+d_{5}+d_6\leq 2n+20$.
\end{enumerate}
\end{theorem}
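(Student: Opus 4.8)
\emph{Sketch of a proof.} The statement is an ``if and only if'', so I would split it into a \emph{necessity} part (every $c$-cyclic graph has a degree sequence obeying the listed relations) and a \emph{sufficiency} part (every nonincreasing integer sequence obeying them is realizable by a connected graph of cyclomatic number exactly $c$), handling the seven values $0\le c\le 6$ in parallel. The common backbone of both directions is the handshake identity $\sum_{i=1}^n d_i=2m$ together with the defining relation $c=m-n+1$: these immediately pin down $n$ in terms of $m$ (the equalities $n=m+1,\;n=m,\;n=m-1,\dots$ in the statement) and fix the degree sum at $2(n+c-1)$. The bound $d_1\le n-1$ is simplicity, and the lower bounds $m\ge 1,3,5,6,8,9,10$ just record the smallest size of a connected graph that can carry $c$ independent cycles. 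A useful simplification to note at the outset is that for a \emph{connected} graph the cyclomatic number equals $m-n+1$ as a matter of definition, so once the degree sum forces $m=n+c-1$, \emph{any} connected realization automatically has cyclomatic number exactly $c$; thus ``exactly $c$ independent cycles'' is not a separate obstacle beyond connectivity.

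For necessity I would obtain the partial-sum inequalities by edge counting on the top vertices. Writing $S=\{1,\dots,k\}$ for the $k$ highest-degree vertices, one has $\sum_{i\in S}d_i=2\,e(S)+e(S,V\setminus S)$, where $e(S)$ counts edges induced on $S$ and $e(S,V\setminus S)$ those crossing to the complement. Since any subgraph of a $c$-cyclic graph has cyclomatic number at most $c$, one gets $e(S)\le |S|-1+c$; but the genuinely sharp input is structural: the $c$ cycles force a minimum number of vertices of degree $\ge 2$ (a single triangle already forces three), so the degree mass cannot concentrate on $S$. For instance in the unicyclic case the unique cycle of length $\ell\ge 3$ meets $S=\{1,2\}$ in at most two vertices, whence $\sum_{i\ge 3}d_i\ge 2(\ell-2)+(n-\ell)=n+\ell-4\ge n-1$, giving $d_1+d_2\le n+1$. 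The same count applied to a carefully chosen nested family of top sets, followed by an integer combination, should yield the higher partial-sum bounds and in particular the \emph{weighted} constraints $2d_1+2d_2+d_3+d_4+d_5\le 2n+16$ for $c=5$ and their $c=6$ analogues, which are strictly sharper than the plain partial sums and hence require the refined structural bound rather than the crude cyclomatic one.

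For sufficiency I would reformulate the goal using two classical facts. By the Erd\H{o}s--Gallai theorem a nonincreasing sequence with even sum is graphical iff $\sum_{i=1}^k d_i\le k(k-1)+\sum_{i>k}\min(d_i,k)$ for all $k$; and a graphical sequence with all parts positive and $\sum d_i\ge 2(n-1)$ admits a \emph{connected} realization. Since here $\sum d_i=2(n+c-1)\ge 2(n-1)$, sufficiency reduces to showing that the short list of relations in the statement (together with the fixed sum) \emph{implies every} Erd\H{o}s--Gallai inequality, after which the connectivity lemma upgrades the realization to a connected one of the required size. Equivalently one may run a Havel--Hakimi style induction: saturate the top-degree vertex against the next-largest ones, delete it, and verify that the reduced sequence still satisfies the listed relations for the appropriate $c$, so that the inductive hypothesis furnishes a graph to reattach.

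The main obstacle is this sufficiency/domination step, and within it the bookkeeping for $c=5$ and $c=6$. The difficulty is that one must show the \emph{complete} finite list dominates the infinite family of Erd\H{o}s--Gallai inequalities under the single linear constraint $\sum d_i=2(n+c-1)$; the binding constraints for large $k$ are the weighted inequalities rather than ordinary partial sums, and these are delicate to propagate through either the domination argument or the inductive reduction (they can fail to be inherited unless the reduction target is chosen with care). Verifying that no further Erd\H{o}s--Gallai inequality is independently active, so that the stated relations are not merely necessary but jointly sufficient, is the crux where I would expect to spend the bulk of the effort.
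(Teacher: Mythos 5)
First, a point of reference: the paper does not prove this statement at all. It is quoted verbatim from Schocker \cite{Scho} (Theorem 2.4 and Corollary 2.5 there), and the paper's own contribution begins only with Theorem \ref{Schocker1}, which is \emph{derived from} this result. So there is no in-paper proof to compare your attempt against; your proposal has to stand on its own, and as written it is a plan rather than a proof, with two genuine gaps.

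On necessity, your edge-counting identity $\sum_{i\in S}d_i=2e(S)+e(S,V\setminus S)$ combined with the cyclomatic bound $e(S)\le |S|-1+c$ and $e(S,V\setminus S)\le\sum_{i\notin S}d_i$ yields only $\sum_{i\le k}d_i\le n+k+2c-2$, which is too weak by an additive constant in \emph{every} case of the theorem (e.g.\ it gives $d_1+d_2\le n+2$ for $c=1$ where $n+1$ is claimed). You recognize this and supply the sharper structural argument for the single instance $(c,k)=(1,2)$, but the analogous refinement is never formulated for the remaining dozen inequalities, and for the weighted constraints $2d_1+2d_2+d_3+d_4+d_5\le 2n+16$ (and the $c=6$ analogues) the phrase ``an integer combination of counts over a carefully chosen nested family of top sets'' is not an argument: decomposing $2d_1+2d_2+d_3+d_4+d_5=(d_1+d_2)+(d_1+\cdots+d_5)$ and using the available partial-sum bounds does not reach $2n+16$, so a genuinely new structural input is needed there and is missing. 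On sufficiency, you correctly reduce the problem (via Erd\H{o}s--Gallai plus the connected-realization criterion, and the observation that a connected realization with $m=n+c-1$ edges automatically has cyclomatic number $c$) to showing that the finite list of stated inequalities dominates all Erd\H{o}s--Gallai inequalities under the fixed degree sum. But you then explicitly defer that domination step, and it is precisely where all the content of the theorem lives; nothing in the sketch indicates how it would be carried out, nor why no further inequality is independently active for $c=5,6$. As it stands the proposal identifies a reasonable route but establishes neither direction.
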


\noindent In the class of $c$-cyclic graphs we are interested in finding  graphs associated to the maximal  (minimal) degree sequence
with respect to the majorization order.

To this aim, by Theorem \ref{Schocker}, we derive a new characterization of c-cyclic graphs as those whose degree sequences belongs to particular subsets of $\Sigma_{2(n+c-1)}$.
    Next theorem gives the structure of the subset of $\Sigma_{2(n+c-1)}$ we will deal with. Notice that the cases $c \le 2$ are well known (see \cite{Tome2008}) while the case $c=3$ was incorrectly discussed in Lemma 2.2 of \cite{Zhu2011}.

\begin{theorem} \label{Schocker1}
The integers $(n-1) \ge d_1 \ge d_2 \ge \cdots \ge d_n $ are the vertex degree sequence of
\begin{enumerate}[i.]
\item a tree ($c=0$) if and only if $n \ge 2$, $\sum_{i=1}^n d_i= 2(n-1)$;
\item a unicyclic graph ($c=1$) if and only if $n \ge 3$, $\sum_{i=1}^n d_i= 2n$ and at least three of them are greater or equal to 2;
\item a bicyclic graph ($c=2$) if and only if $n\ge 4$, $\sum_{i=1}^n d_i= 2(n+1)$ and at least four  of them are greater than or equal to 2;
\item a tricyclic graph  ($c=3$) if and only if $n \ge 4$, $\sum_{i=1}^n d_i= 2(n+2)$, and one of the following conditions holds:
\begin{enumerate}[1.]
\item if $n \ge 5$, at least five of them are greater or equal to 2,
\item at least four of them are greater or equal to 3;
\end{enumerate}
\item a tetracyclic  graph ($c=4$) if and only if $n \ge 5$, $\sum_{i=1}^n d_i= 2(n+3)$, and one of the following conditions holds:
\begin{enumerate}[1.]
\item if $n \ge 6$, at least six of them are greater than or equal to 2,
\item at least four of them are greater or equal to 3 and at least five of them are greater than or equal to 2;
\end{enumerate}
\item a pentacyclic graph ($c=5$) if and only if $n \ge 5$, $\sum_{i=1}^n d_i= 2(n+4)$, and one of the following conditions holds:
\begin{enumerate}[1.]
\item if $n \ge 7$, at least seven of them are greater than or equal to 2,
\item if $n \ge 6$ at least six of them are greater than or equal to 2 and at least four of them are greater than or equal to 3,
    \item at least five of them are greater than or equal to 3 and at least three of them are greater than or equal to 4.
\end{enumerate}
\item a hexacyclic graph ($c=6$) if and only if $n \ge 5$, $\sum_{i=1}^n d_i= 2(n+5)$, and one of the following conditions holds:
\begin{enumerate}[1.]
\item if $n \ge 8$, at least eight of them are greater than or equal to 2,
\item if $n \ge 7$ at least seven of them are greater than or equal to 2 and at least four of them are greater than or equal to 3,
\item if $n \ge 6$ at least six of them are greater than or equal to 2,  at least five of them are greater than or equal to 3 and at least three of them are greater or equal to 4,
\item if $n \ge 6$ at least six of them are greater or equal to 3,
\item at least five of them are greater than or equal to 4.
\end{enumerate}
\end{enumerate}
\end{theorem}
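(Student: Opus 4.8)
The plan is to deduce Theorem \ref{Schocker1} directly from Schocker's characterization (Theorem \ref{Schocker}): since both statements characterize the same objects — the degree sequences of connected $c$-cyclic graphs — it suffices to prove that the two systems of conditions are equivalent under the standing hypothesis $(n-1)\ge d_1\ge\cdots\ge d_n$. First I would dispose of the bookkeeping constraints. Because $c=m-n+1$, the relation $n=m-(c-1)$ is the same as $\sum_{i=1}^n d_i=2m=2(n+c-1)$, and Schocker's lower bounds on $m$ translate verbatim into the stated lower bounds on $n$. Since a $c$-cyclic graph is connected, every $d_i\ge 1$, which I use throughout. The two elementary facts driving the translation are: (a) for a nonincreasing sequence, ``at least $k$ of the $d_i$ are $\ge\ell$'' is exactly $d_k\ge\ell$; and (b) an upper bound on a leading partial sum $d_1+\cdots+d_k$ is, via the fixed total $\sum d_i=2(n+c-1)$, equivalent to a lower bound on the tail $d_{k+1}+\cdots+d_n$, which — since the tail entries are $\ge 1$ and nonincreasing — forces a prescribed number of the $d_i$ to clear given thresholds.

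For the implication ``threshold conditions $\Rightarrow$ Schocker conditions'' I would argue mechanically. Fixing one of the disjunctive threshold hypotheses, say $d_1\ge\cdots\ge d_r\ge\ell$ together with the relevant bounds, I would bound each leading partial sum from above by replacing the tail entries with their minimal admissible values: for example, when $c=1$ the hypothesis $d_3\ge 2$ gives $d_1+d_2=2n-\sum_{i\ge 3}d_i\le 2n-\bigl(2+(n-3)\bigr)=n+1$, which is precisely Schocker's inequality. The same substitution, carried out layer by layer, reproduces every partial-sum inequality for $c=0,1,2,3,4$ and, after regrouping terms, every weighted inequality of the form $2d_1+2d_2+d_3+\cdots$ that appears for $c=5,6$; each disjunct is checked against each inequality separately.

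For the reverse implication ``Schocker conditions $\Rightarrow$ disjunction of threshold conditions'' I would argue by contradiction: assuming every disjunct fails, I would produce a violated Schocker inequality. The negation of a disjunct pins certain $d_i$ down to $1$ (or to $2$, $3$), so the corresponding tail sum is too small and the complementary leading sum exceeds its bound. For $c=3$, for instance, if both $d_4\le 2$ and $d_5\le 1$ hold with $n\ge 5$, then $d_5=\cdots=d_n=1$ forces $d_1+d_2+d_3=n+8-d_4\ge n+6>n+5$, contradicting $d_1+d_2+d_3\le n+5$; the residual small case $n=4$ is settled directly, since then the order bound $d_1\le n-1=3$ forces every $d_i=3$, so the disjunct $d_4\ge 3$ holds automatically. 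This splitting into a ``sparse'' regime (many vertices of degree $\ge 2$, available only for $n$ large) and a ``dense'' regime (fewer vertices of higher degree, available for small $n$) is exactly what produces the several alternatives in each item of Theorem \ref{Schocker1}.

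The hard part will be the reverse direction for $c=5$ and $c=6$, where the weighted inequalities $2d_1+2d_2+d_3+d_4+d_5\le 2n+\cdots$ do not reduce to a single threshold and must be matched simultaneously against all of the disjuncts. Here one cannot simply read off which inequality to contradict; the correct choice depends on the precise degree profile, and the boundary values of $n$ (those close to the minimum $c$-cyclic order) must be enumerated by hand to confirm that the highest-threshold disjuncts, such as ``at least five of them are $\ge 4$'', cover exactly the dense sequences missed by the degree-$\ge 2$ and degree-$\ge 3$ alternatives. Verifying that no admissible sequence is either double-excluded or left uncovered in these weighted cases is the delicate bookkeeping the proof must resolve.
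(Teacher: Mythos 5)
Your proposal is correct and follows essentially the same route as the paper: both directions are handled by translating Schocker's partial-sum upper bounds into tail-sum lower bounds (sufficiency, by substituting the minimal admissible tail values under each threshold disjunct) and by negating all disjuncts to force enough degrees down to $1$ or $2$ and thereby violate one of Schocker's inequalities (necessity), with the boundary orders $n$ near $c+2$ and the weighted inequalities for $c=5,6$ treated by explicit case enumeration. Your worked example for $c=3$, including the separate treatment of $n=4$, reproduces the paper's argument almost verbatim.
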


\begin{proof}

{\bf  Case $\mathbf{c=0}$}: well known.

{\bf Case $\mathbf{c=1}$}. {\it Necessary condition}:  it is evident that the vertices of the unique cycle have the degrees greater than or equal to  2 (see also Lemma 2.1 in \cite{Tome2008}).

{\it Sufficient condition}:  the condition  $d_1+d_2\le n+1$ is equivalent to   $d_3+ \cdots + d_n \ge n-1$ and this inequality is satisfied if $d_3 \ge 2$.

{\bf Case $\mathbf{c=2}$}.   {\it Necessary condition}:  let us assume by contradiction that $d_4 <2$, i.e. $d_4=1$ which implies  $d_1+d_2+d_3 = n+5$,  against the  condition $d_1+d_2+d_3 \le n+4$ (see also Lemma 2.2 in \cite{Tome2008}).

{\it Sufficient condition}: the condition  $d_1+d_2+d_3 \le n+4$ is  equivalent to   $d_4+\cdots + d_n \ge n-2$ while the condition $d_1+d_2 \le n+2$ is equivalent to  $d_3+\cdots+  d_n \ge n$. If $d_4 \ge 2$ they are both satisfied

{\bf Case $\mathbf{c=3}$}. For $n=4$ there is only one tricyclic graph associated to the degree sequence $[3^4]$.  Let us consider the case  $n \ge 5$.

{\it Necessary condition}:   assume by contradiction that $d_5=1$ and $d_4 \le 2$. This implies  $d_1+d_2+d_3+d_4=n+8$ and  $d_1+d_2+d_3 \ge  n+6$, against  the condition $d_1+d_2+d_3 \le n+5$.

{\it Sufficient condition}:  the condition $d_1+d_2+d_3 \le n+5$ is equivalent to   $d_4+\cdots + d_n \ge n-1$ while the condition $d_1+d_2 \le n+3$ is  equivalent
to $d_3+\cdots+  d_n \ge n+1 $.
They are both  satisfied if  either $d_5 \ge 2$ or $d_4 \ge 3$.

{\bf Case $\mathbf{c=4}$}:
Let us consider the case $n \ge 6$.

{\it Necessary condition}:  assume by contradiction that both the conditions are not fulfilled. This means that either $d_5=1$ or $d_6=1$ and $d_4 \le 2$.  In the first case $d_1+d_2+d_3+d_4= n+10$ contradicts the necessary condition $d_1+d_2+d_3+d_4\le n+9$. In the second case, $d_1+d_2+d_3+d_4+d_5=n+11$ and  $d_1+d_2+d_3 \ge n+7$ contradict the necessary condition $d_1+d_2+d_3 \le n+6$.

{\it Sufficient condition}: the condition $d_{1}+d_{2}+d_{3}+d_{4}\leq n+9$ is equivalent to  $d_{5}+\cdots +d_{n}\geq n-3$ which is satisfied if $d_{5}\geq 2$,  thus in particular if either \emph{v.1} or \emph{v.2} hold.

The  condition $d_{1}+d_{2}+d_{3}\leq n+6$ is equivalent to  $d_{4}+\cdots +d_{n}\geq n$   while  $%
d_{1}+d_{2}\leq n+4$   is equivalent to   $d_{3}+\cdots +d_{n}\geq n+2$ . They are both
satisfied  if either \emph{v.1} or \emph{v.2} hold.

The case $n=5$ can be proved following similar steps.

{\bf Case $\mathbf{c=5}$}: for $n=5$ there is only one pentacyclic graph associated to the degree sequence $[4^3,3^2]$.

Let us consider the case $n \ge 7$.

{\it Necessary condition}: assume by contradiction that neither of the three conditions are fulfilled. Then one of the following cases should occur:

a)\,\,  $d_6=1$ and $d_5 \le 2$;\,\,\,\, b) \,\, $d_6=1$ and $d_3 \le 3$; \,\,\,\, c) \,\, $d_7=1$ and $d_4 \le 2$.

 %

 In  cases a) and b) we have $d_1+d_2+d_3+d_4+d_5=n+13$. In case a) we get $d_1+d_2+d_3+d_4 \ge n+11$ which contradicts the necessary condition $d_1+d_2+d_3+d_4 \le n+10$. In case b) $d_1+d_2 \ge n+4$ implies  $2d_1+2d_2 \ge 2n+8$. By using the necessary condition  $2d_1+2d_2+d_3+d_4+d_5 \le 2n+16$, we get $d_3+d_4+d_5 \le 8$ and thus $d_5 \le 2$. But under this condition we get again $d_1+d_2+d_3+d_4\ge n+11$ contradicting the necessary condition  $d_1+d_2+d_3+d_4\le n+10$.

 Finally in case c)  we have $d_1+d_2+d_3+d_4+d_5+d_6=n+14$ and  $d_1+d_2+d_3 \ge n+8$ against the necessary condition $d_1+d_2+d_3 \le n+7$.

{\it Sufficient condition}: the condition $d_{1}+d_{2}+d_{3}+d_{4}\leq n+10$  is equivalent to
$d_5 + \cdots + d_n \ge n-2$ which is
satisfied if either  $d_{6}\geq 2$ or $d_{5}\geq 3$, thus in particular if \emph{vi.1} or \emph{vi.2} or \emph{vi.3} hold.

The conditions $d_{1}+d_{2}+d_{3}\leq n+7$, equivalent to $d_{4}+\cdots
+d_{n}\geq n+1$, and $d_1+d_2 \le n+5$, equivalent to $d_3 + \cdots d_n \ge n+3$, are both fulfilled if \emph{vi.1} or \emph{vi.2} or \emph{vi.3} hold.

The last condition $2d_{1}+2d_{2}+d_{3}+d_{4}+d_{5}\leq 2n+16$ is equivalent to
 $d_{6}+ \cdots ..+d_{n}\geq d_{1}+d_{2}-8$.
 Easy computations show that it is satisfied either for  $d_{7}\geq 2$ or for $d_6 \ge 2 $ and $d_4 \ge 3$. Indeed, for $d_7 \ge 2$ we get
 $$
 d_6+ \cdots + d_n \ge 4 + (n-7) = (n+5) - 8 \ge d_1+d_2 -8,
 $$
 while for  $d_6 \ge 2$ and $d_4 \ge 3$ we get
 $$
 d_3+d_4+d_6+\cdots +d_n \ge 6+2+(n-6)= (n+10)-8 \ge d_1+d_2+d_3+d_4 -8.
 $$
Finally, for  $d_{5}\geq 3$ and $d_3 \ge 4$ we get
$$
d_3+d_4+d_6+\cdots +d_n \ge 7+ (n-5)= (n+10)-8 \ge d_1+d_2+d_3+d_4 -8.
 $$

The case $n=6$ can be proved following similar steps.

 {\bf Case $\mathbf{c=6}$}: for $n=5$ there is only one pentacyclic graph associated to the degree sequence $[4^5]$. Let us  consider the case $n \ge 8.$

  {\it Necessary condition}:  let us assume by contradiction that neither of the  conditions are fulfilled. Then one of the following five cases should occur:

 a) \,\, $d_6=1$ and $d_5 \le 3$; \,\,\,\, b) \,\,$d_6=1$ and $d_4 \le 2$; \,\,\,\, c)  $d_7=1$ and $d_5 \le 2$;

 d) \,\, $d_7=1$, $d_6 \le 2$  and $d_3 \le 3$;\,\,\,\, e) $d_8=1$ and $d_4 \le 2$.


 In  cases a) and b) we have $d_1+d_2+d_3+d_4+d_5=n+15$. In case a) we get $d_1+d_2+d_3+d_4 \ge n+12$ against the necessary condition $d_1+d_2+d_3+d_4 \le n+11$. In case b) we get $d_1+d_2+d_3  \ge n+11$ against  the necessary condition $d_1+d_2+d_3 \le n+8$.

 In cases c) and d) we have $d_1+d_2+d_3+d_4+d_5 + d_6 =n+16$. In case c) we get again $d_1+d_2+d_3+d_4 \ge n+12$ which contradicts the necessary condition $d_1+d_2+d_3+d_4 \le n+11$. In case d) we get  $d_1+d_2 \ge n+5$ which implies $2d_1+2d_2 \ge 2n+10$. By using the necessary condition  $2d_1+2d_2+d_3+d_4+d_5 +d_6\le 2n+20$, we get $d_3+d_4+d_5 +d_6\le 10$ and thus $d_5 \le 2$ or $d_6=1$.  Under the  condition $d_5 \le 2$ we go back to case c) while for $d_6=1$ to case a).

 Finally in case e)  we have $d_1+d_2+d_3+d_4+d_5+d_6+d_7=n+17$ and we get $d_1+d_2+d_3 \ge n+9$ contradicting the necessary condition $d_1+d_2+d_3 \le n+8$.

{\it Sufficient condition}: the condition $d_{1}+d_{2}+d_{3}+d_{4}\leq n+11$  is equivalent to
$d_5 + \cdots + d_n \ge n-1$ which is
satisfied if $d_{7}\geq 2$ or $d_6 \ge 2$ and $d_5 \ge 3$ or $d_{5}\geq 4$, thus in particular if \emph{vii.1}-\emph{vii.5}  hold.

The conditions $d_{1}+d_{2}+d_{3}\leq n+7$, equivalent to $d_{4}+\cdots
+d_{n}\geq n+2$, and $d_1+d_2 \le n+5$, equivalent to $d_3 + \cdots d_n \ge n+4$, are both fulfilled if one of the conditions \emph{vii.1}-\emph{vii.5}  holds.

Let us now consider the last two conditions: $2d_{1}+2d_{2}+d_{3}+d_{4}+d_{5}\leq 2n+16$ is equivalent to
 $d_{6}+ \cdots +d_{n}\geq d_{1}+d_{2}-8$ while  $2d_{1}+2d_{2}+d_{3}+d_{4}+d_{5}+d_6 \leq 2n+20$ is equivalent to $d_{7}+ \cdots +d_{n}\geq d_{1}+d_{2}-10$.
First of all, notice that for $d_6 \geq 2$ the first inequality is satisfied if and only if  the second one holds.

 Easy computations show that the inequality $d_{7}+ \cdots +d_{n}\geq d_{1}+d_{2}-10$ is  satisfied either for  $d_{8}\geq 2$ or for $d_7 \ge 2 $ and $d_4 \ge 3$. Indeed, for $d_8 \ge 2$ we get
 $$
 d_7+ \cdots + d_n \ge 4 + (n-8) = (n+6) - 10 \ge d_1+d_2 -10,
 $$
 while for  $d_7 \ge 2$ and $d_4 \ge 3$ we get
 $$
 d_3+d_4+d_6+\cdots +d_n \ge 6+4+(n-7)= (n+11)-8 \ge d_1+d_2+d_3+d_4 -8.
 $$
In a similar way, for $d_6 \geq 2$,  $d_5 \ge 3$ and $d_3 \ge 4$ we get
$$
d_3+d_4+d_6+\cdots +d_n  \ge 4+3+2 +(n-6) = (n+11)-8 \ge d_1+d_2+d_3+d_4 -8
$$
while for $d_6 \ge 3$
$$
d_3+d_4+d_6+\cdots +d_n  \ge 9 + (n-6)= (n+11)-8\ge d_1+d_2+d_3+d_4 -8.
$$
 Finally for $d_5 \ge 4$ we get
 $$
 d_3+d_4+d_7 + \cdots +d_n \geq 8 + (n-6) = (n+12)-10 \geq d_1+d_2+d_3+d_4 -10
 $$
 $$
 d_3+d_4+d_6 + \cdots +d_n \geq 8 + (n-5) = (n+11)-8 \geq d_1+d_2+d_3+d_4 -10
 $$

The case $n=6$ and $n=7$ can be proved following similar steps.
\end{proof}

\section{Extremal degree sequences}

Endowed with Theorem \ref{Schocker1}, we are now in a  position to apply the results of Section 2 in order to get extremal  degree sequences associated to c-cyclic graphs. This topic has been investigated also by Gutman who gave in \cite{Gut89} a characterization of maximal degree sequences by using partitions of an integer in unequal parts (see Lemma 2 in \cite{Gut89}). This approach is completely different from ours and provides only the maximal elements while we are also able to build minimal ones.

\noindent In the sequel, for any $c$,  the vector $\mathbf{d}=(d_1,d_2,\cdots, d_n)$ is arranged in  non increasing order,  $d_1\ge d_2 \ge \cdots \ge d_n$ and $\sum_{i=1}^n d_i =2(n+c-1)$, i.e. $\mathbf{d} \in \Sigma_{2(n+c-1)}$.

\begin{itemize}
\item{\bf Trees}. We face the set
$$
\mathcal{S}_0=\{\mathbf{d} \in D \cap\Sigma_{2(n-1)}:  1 \le d_i\le (n-1) \}
$$
The maximal element of $\mathcal{S}_0$ is given by:
\begin{equation*}
\mathbf{x^{\ast }}(\mathcal{S}_0)=\left[(n-1), 1^{n-1} \right]
\end{equation*}
which is the degree sequence of the star $S_n$.

The minimal element of the set $\mathcal{S}_0$ is given by
\begin{equation}
\mathbf{x_{\ast} }(\mathcal{S}_0) = \left[ \left (\frac{2(n-1)}{n}\right )^n \right].
\end{equation}
Since it has not integer components for $n>2$,  we  apply the procedure explained in Section 2.
Taking $k= 2(n-1)-\Bigg\lfloor \dfrac{2(n-1)}{n} \Bigg\rfloor n = (n-2)$ for $n\ge 2$, the minimal element with integer component is
\begin{equation*}
\mathbf{x_{\ast }'}(\mathcal{S}_0)=\left[ 2^{n-2}, 1^2 \right]
\end{equation*}
which is the degree sequence of the path $P_n$.

\item{\bf Unicyclic. } We face the set
\begin{align*}
\mathcal{S}_1= \{\mathbf{d} \in \Sigma_{2n}:   1 \le d_n \le \cdots \le d_4 \le (n-1),\,\,\,  2 \le d_3 \le d_2 \le d_1 \le (n-1) \}
\end{align*}
The maximal element of $\mathcal{S}_1$ is given by:
\begin{equation*}
\mathbf{x^{\ast }}(\mathcal{S}_1)=\left[(n-1), 2^2, 1^{n-3} \right]
\end{equation*}
which is the degree sequence of  the graph obtained joining two leaves of the star $S_n$.
The minimal element of the set $\mathcal{S}_1$ is given by
\begin{equation}
\mathbf{x_{\ast} }(\mathcal{S}_1) = \left[2^n \right ]
\end{equation}
which is the degree sequence of the cycle.
\item{\bf Bicyclic. } We face the set
\begin{align*}
\mathcal{S}_2= \{\mathbf{d} \in \Sigma_{2(n+1)}:
 1 \le d_n \le \cdots \le d_5 \le (n-1), \,\,\, 2 \le d_4 \le \cdots \le d_1 \le (n-1) \}
\end{align*}
The maximal element of $\mathcal{S}_2$ is given by:
\begin{equation*}
\mathbf{x^{\ast }}(\mathcal{S}_2)=\left[(n-1), 3,2^2, 1^{n-4} \right]
\end{equation*}
which is the degree sequence of the graph $H^1_n$ in \cite{Zhang2006}.

The minimal element of the set $\mathcal{S}_2$ is given by
\begin{equation}
\mathbf{x_{\ast} }(\mathcal{S}_2) = \left[ \left (\frac{2(n+1)}{n}\right)^n \right]
\end{equation}
Since it has not integer components,  we apply the procedure explained  in Section 2.

Taking  $k= 2(n+1)-\Bigg\lfloor \dfrac{2(n+1)}{n} \Bigg\rfloor n = 2 $, the minimal element with integer component is
\begin{equation*}
\mathbf{x_{\ast }'}(\mathcal{S}_2)=\left[ 3^2,2^{n-2} \right ]
\end{equation*}
which is the degree sequence of the class of graph $G_n^1$ in \cite{Zhang2006}. They can be obtained joining two non adjacent nodes in a circle.

\item{\bf Tricyclic.}  For $n=4$ there is only one tricyclic graph associated to the sequence $[3^4]$.  Assuming $n \ge 5$, unlike the previous cases, we  should now consider  two different sets:
  \begin{align*}
\mathcal{S}_3^1= \{\mathbf{d} \in \Sigma_{2(n+2)}:    1 \le d_n \le \cdots \le d_6 \le (n-1), \,\,\,  2 \le d_5 \le \cdots \le d_1 \le (n-1) \}
\end{align*}
\begin{align*}
\mathcal{S}_3^2= \{\mathbf{d} \in \Sigma_{2(n+2)}:  1 \le d_n \le \cdots \le d_5 \le (n-1), \,\,\, 3 \le d_4 \le \cdots \le d_1 \le (n-1) \}
\end{align*}

The maximal elements are, respectively,
\begin{equation*}
\mathbf{x^{\ast }}(\mathcal{S}_3^1)=\left[(n-1), 4,2^3, 1^{n-5} \right]
\end{equation*}
\begin{equation*}
\mathbf{x^{\ast }}(\mathcal{S}_3^2)=\left[(n-1), 3^3, 1^{n-4} \right]
\end{equation*}
Notice that the two degree sequences are not comparable in the majorization order. Thus in case of tricyclic graphs we have two maximal degree sequences which correspond to the graphs $G^{*}(\mathcal{S}_3^1)$ and $G^{*}(\mathcal{S}_3^2)$ in Figure \ref{fig:1}, drawing for the sake of simplicity for $n=8$ (the second one is  the graph $H_4$ in \cite{Zhu2011}).

The minimal element of the set $\mathcal{S}_3^1$ is given by
\begin{equation}
\mathbf{x_{\ast} }(\mathcal{S}_3^1) = \left[\left (\frac{2(n+2)}{n}\right )^n \right].
\end{equation}
Since it has not integer components for $n \ge 5$,  we  apply the procedure given in Section 2.
Taking  $k= 2(n+2)-\Bigg\lfloor \dfrac{2(n+2)}{n} \Bigg\rfloor n = 4 $, the minimal element with integer component is
\begin{equation*}
\mathbf{x_{\ast }'}(\mathcal{S}_3^1)=\left[ 3^4, 2^{n-4}\right ]
\end{equation*}

The minimal element of the set $\mathcal{S}_3^2$ can be computed via formula \eqref{m_1 le M_2}. In our case $\frac a n =\frac{2(n+2)}{n}$,  $m_1=3$ and $h=4$. Since for $n \ge 5$, we are in the case  $\frac{2(n+2)}{n} < 3$.  Simple computations show that
the  minimal element of the set $\mathcal{S}_3^2$ is the same as before,
i.e. $\mathbf{x_{\ast}' }(\mathcal{S}_3^1)=\mathbf{x_{\ast} }(\mathcal{S}_3^2)$.

Thus in case of tricyclic graphs we have one minimal degree sequence which corresponds to the graph $G_{*}(\mathcal{S}_3^1)$ in Figure \ref{fig:2}, drawing for the sake of simplicity for $n=8$.

\item{\bf Tetracyclic.} For $n \ge 6$ we should consider the two different sets:
\begin{align*}
\mathcal{S}_{4}^{1}=\{\mathbf{d}  \in \Sigma_{2(n+3)}:
 1\leq d_{n}\leq \cdots \leq d_{7}\leq (n-1),\,\,\, 2\leq d_{6}\leq \cdots \leq
d_{1}\leq (n-1)\}
\end{align*}%
\begin{align*}
\mathcal{S}_{4}^{2}=\{\mathbf{d}  \in \Sigma_{2(n+3)}:
 & 1\leq d_{n}\leq \cdots \leq d_{6}\leq (n-1),\,\,\, 2 \leq d_5 \le (n-1),\\
 & 3\leq d_{4}\leq \cdots \leq
d_{1}\leq (n-1)\},
\end{align*}
while for $n=5$ only the second one.

For $n \ge 6$, the maximal elements are respectively,
\begin{equation*}
\mathbf{x^{\ast }}(\mathcal{S}_{4}^{1})=\left[ (n-1),5,2^4, 1^{n-6} \right]
\end{equation*}
\begin{equation*}
\mathbf{x^{\ast }}(\mathcal{S}_{4}^{2})=\left[ (n-1),4,3^2, 2,1^{n-5} \right]
\end{equation*}
Notice that  the maximal element for $\mathcal{S}_{4}^{2}$ has been computed via Theorem \ref{th:massimo} providing  $k=1$.

The two degree sequences are not comparable in the majorization
order. Thus also in case of tetracyclic graphs we have two maximal degree sequences which correspond, to the  graphs $G^{*}(\mathcal{S}_{4}^{1})$ and $G^{*}(\mathcal{S}_{4}^{2})$ in Figure \ref{fig:1} drawing for the sake of simplicity for $n=8$.

The minimal element of the set $\mathcal{S}_{4}^{1}$ for $n \ge 6$ is given by
\begin{equation}
\mathbf{x_{\ast }}(\mathcal{S}_{4}^{1})=\left[\left( \frac{2(n+3)}{n} \right )^n  \right] .
\end{equation}%
For $n=6$ we get $\mathbf{x_{\ast }}(\mathcal{S}_{4}^{1}) = [3^6]$,  while for $n > 6$ the procedure given in
Section 2 must be applied.  Taking $k=2(n+3)-\Bigg\lfloor\dfrac{2(n+3)}{n}\Bigg\rfloor n=6$,
the minimal element with integer components is
\begin{equation*}
\mathbf{x_{\ast }^{\prime }}(\mathcal{S}_{4}^{1})=\left[ 3^6,2^{n-6} \right]
\end{equation*}


The  minimal  element of the set $\mathcal{S}_{4}^{2}$ for $n \ge 6$ is
the same as before, i.e. $\mathbf{x_{\ast }'}(\mathcal{S}_{4}^{1})=\mathbf{%
x_{\ast }}(\mathcal{S}_{4}^{2})$. This can be proved easily by direct computations. Indeed  $\mathbf{x_{\ast }'}(\mathcal{S}_{4}^{1})\in{S}_{4}^{2}$  and it is impossible to build an element of ${S}_{4}^{2}$ minorizing it.

Thus in case of tetracyclic graphs we have one minimal degree sequence which corresponds the graph
$G_{*}(\mathcal{S}_{4}^{1})$ in  Figure \ref{fig:2} drawing for the sake of simplicity for $n=8$.

Finally  for $n=5$  simple computations give as maximal element $[4^2,3^2,2]$ and as minimal $[4,3^4]$.

\item {\bf Pentacyclic.} For $n \ge 7$ we should  consider three different sets:
\begin{align*}
\mathcal{S}_{5}^{1}=\{\mathbf{d} \in \Sigma_{2(n+4)}:
1\leq d_{n}\leq \cdots \leq d_{6}\leq (n-1),\,\,\, 2\leq d_{7}\leq \cdots \leq
d_{1}\leq (n-1)\}
\end{align*}%
\begin{align*}
\mathcal{S}_{5}^{2}=\{\mathbf{d} \in  \Sigma_{2(n+4)}: \,\,
&  1\leq d_{n}\leq \cdots \leq d_{7}\leq (n-1),\,\,\,  2 \le d_6 \le d_5 \le (n-1), \\
& 3 \le d_4 \le  \cdots \leq
d_{1}\leq (n-1)\}
\end{align*}
\begin{align*}
\mathcal{S}_{5}^{3}=\{\mathbf{d} \in  \Sigma_{2(n+4)}: \,\,
&  1\leq d_{n}\leq \cdots \leq d_{6}\leq (n-1),\,\,\, 3\leq d_{5}\leq d_4 \le (n-1), \\
& 4 \le d_3 \le d_2 \le d_{1}\leq (n-1)\}
\end{align*}
while for $n=5$ only the last one and for $n=6$ the second and the third sets.

The maximal elements, for $n \ge 7$, are respectively,
\begin{equation*}
\mathbf{x^{\ast }}(\mathcal{S}_{5}^{1})=\left[ (n-1),6,2^5,1^{n-7}\right]
\end{equation*}%
\begin{equation*}
\mathbf{x^{\ast }}(\mathcal{S}_{5}^{2})=\left[ (n-1),5,3^2,2^2,1^{n-6}\right]
\end{equation*}%
\begin{equation*}
\mathbf{x^{\ast }}(\mathcal{S}_{5}^{3})=\left[ (n-1),4^2,3^2,1^{n-5}\right]
\end{equation*}%

Notice that the three degree sequences are not comparable in the majorization
order. Thus in case of pentacyclic graphs we have three degree sequences which correspond to the graphs $G^{*}(\mathcal{S}_{5}^{1})$, $G^{*}(\mathcal{S}_{5}^{2})$ and $G^{*}(\mathcal{S}_{5}^{3})$ in Figure \ref{fig:1} drawing for the sake of simplicity for $n=9$.

The minimal element of the set $\mathcal{S}_{5}^{1}$ is given by
\begin{equation}
\mathbf{x_{\ast }}(\mathcal{S}_{5}^{1})=\left[ \left (\frac{2(n+4)}{n} \right)^n \right] .
\end{equation}%
For  $n=8$ it  has integer components and thus $\mathbf{x_{\ast }}(\mathcal{S}_{5}^{1})=[3^8] $.

For $n=7$, by applying the procedure of Section 2, we find
as minimal element the vector $[4,3^6]$ while for $n>8$,  taking $k=2(n+4)-\Bigg\lfloor\dfrac{2(n+4)}{n}\Bigg\rfloor n=8$,
the minimal element with integer components is
\begin{equation*}
\mathbf{x_{\ast }^{\prime }}(\mathcal{S}_{5}^{1})=\left[ 3^8,2^{n-8}\right].
\end{equation*}

Simple computations show that the minimal element of the set $\mathcal{S}_{5}^{2}$, for $n\ge 7$
is the same as before.  Indeed, the minimal element of the set $\mathcal{S}_{5}^{1}$  belongs to  $\mathcal{S}_{5}^{2}$ and it is impossible to find an element of $\mathcal{S}_{5}^{2}$  minorizing  it.

For the set $\mathcal{S}_{5}^{3}$ and $n \ge 7$,  we cannot apply the same argument  as above since the minimal element of the set $\mathcal{S}_{5}^{1}$ does not belong to $\mathcal{S}_{5}^{3}$. By Theorem \ref{th:minimo} we get $k=5, d=0, \rho=2$. Thus the minimal element is
\begin{equation*}
\mathbf{x_{\ast }}(\mathcal{S}_{5}^{3})=\left[ 4^3,3^2, 2^{n-5}\right]
\end{equation*}
 Notice that $\left[ 3^8,2^{n-8}\right]   \trianglelefteq \left[ 4^3,3^2, 2^{n-5}\right]$ .

Thus in case of pentacyclic graphs we have one minimal degree sequence which corresponds to the graph $G_{*}(\mathcal{S}_{5}^{1})$ in Figure \ref{fig:2} drawing for the sake of simplicity for $n=9$.

Finally for $n=5$ there is only one pentacyclic graph associated to the degree sequence $[4^3,3^2]$ while for $n=6$ we get as maximal elements of  $\mathcal{S}_{5}^{2}$ and $\mathcal{S}_{5}^{3}$ the non comparable sequences $[5^2, 3^2,2^2]$ and $[5,4^2,3^2,1]$. The minimal elements are $[4^2,3^4]$ and   $[4^3,3^2,2]$ with $[4^2,3^4] \trianglelefteq [4^3,3^2,2]$.

\item {\bf Hexacyclic.} For $n \ge 8$ we should  consider five  different sets:
\begin{align*}
\mathcal{S}_{6}^{1}=\{\mathbf{d} \in  \Sigma_{2(n+5)}:\,\,
 1\leq d_{n}\leq \cdots \leq d_{9}\leq (n-1),\,\,\, 2\leq d_{8}\leq \cdots \leq
d_{1}\leq (n-1)\}
\end{align*}%
\begin{align*}
\mathcal{S}_{6}^{2}=\{\mathbf{d} \in  \Sigma_{2(n+5)}: \,\,
& 1\leq d_{n}\leq \cdots \leq d_{8}\leq (n-1),\,\,\,  2 \le d_7 \le d_6 \le d_5 \le (n-1), \\
 & 3 \le d_4 \le  \cdots \leq d_{1}\leq (n-1)\}
\end{align*}
\begin{align*}
\mathcal{S}_{6}^{3}=\{\mathbf{d} \in   \Sigma_{2(n+5)}:\,\,
&  1\leq d_{n}\leq \cdots \leq d_{7}\leq (n-1),\,\,\, 2 \leq d_6 \leq (n-1),  \\
 &3\leq d_{5} \leq d_4 \leq (n-1) ,\,\,\,  4 \le d_3 \le d_2 \le d_{1}\leq (n-1)\}
\end{align*}
\begin{align*}
\mathcal{S}_{6}^{4}=\{\mathbf{d} \in   \Sigma_{2(n+5)}:
 1\leq d_{n}\leq \cdots \leq d_{7}\leq (n-1),\,\,
3\leq d_{6}\leq \cdots \leq d_{1}\leq (n-1)\}
\end{align*}
\begin{align*}
\mathcal{S}_{6}^{5}=\{\mathbf{d} \in  \Sigma_{2(n+5)}:
1\leq d_{n}\leq \cdots \leq d_{6}\leq (n-1),\,\,\, 4\leq d_{5}\leq \cdots \le d_{1}\leq (n-1)\}
\end{align*}

while for $n=5$ only the last one, for $n=6$ the last three sets and for $n=7$ the last four sets.

The maximal elements, for $n \ge 8$, can be computed via Theorem \ref{th:massimo} and are respectively,
\begin{equation*}
\mathbf{x^{\ast }}(\mathcal{S}_{6}^{1})=\left[ (n-1),7,2^6,1^{n-8} \right]
\end{equation*}
\begin{equation*}
\mathbf{x^{\ast }}(\mathcal{S}_{6}^{2})=\left[ (n-1),6,3^2,2^3,1^{n-7}\right]
\end{equation*}
\begin{equation*}
\mathbf{x^{\ast }}(\mathcal{S}_{6}^{3})=\left[ (n-1),5, 4,3^2,2, 1^{n-6}\right]
\end{equation*}%
\begin{equation*}
\mathbf{x^{\ast }}(\mathcal{S}_{6}^{4})=\left[ (n-1),5,3^4,1^{n-6}\right]
\end{equation*}
\begin{equation*}
\mathbf{x^{\ast }}(\mathcal{S}_{6}^{5})=\left[ (n-1),4^4,1^{n-5}\right]
\end{equation*}%

Notice that the first three degree sequences and the last one are not comparable in the majorization
order, while $\mathbf{x^{\ast }}(\mathcal{S}_{6}^{4}) \trianglelefteq \mathbf{x^{\ast }}(\mathcal{S}_{6}^{3})$.  Thus in case of hexacyclic graphs we have four degree sequences which correspond to the graphs $G^{*}(\mathcal{S}_6^1)$, $G^{*}(\mathcal{S}_6^2)$, $G^{*}(\mathcal{S}_6^3)$, and $G^{*}(\mathcal{S}_6^5)$ in Figure \ref{fig:1} drawing for the sake of simplicity for $n=11$.

The minimal element of the set $\mathcal{S}_{6}^{1}$ is given by
\begin{equation}
\mathbf{x_{\ast }}(\mathcal{S}_{6}^{1})=\left[ \left (\frac{2(n+5)}{n} \right)^n \right] .
\end{equation}%
It has integer components for $n=10$ and the minimal element is $\mathbf{x_{\ast }}(\mathcal{S}_{6}^{1})=[3^{10}] $.

For $n=8$ and $n=9$, by applying the procedure of Section 2, we find
as minimal element the vectors $[4^2,3^6]$ and $[4,3^8]$, respectively.

Finally,  for $n>10$,  being  $k=2(n+5)-\Bigg\lfloor\dfrac{2(n+5)}{n}\Bigg\rfloor n=10$,
the minimal element with integer components is
\begin{equation*}
\mathbf{x_{\ast }^{\prime }}(\mathcal{S}_{6}^{1})=\left[ 3^{10},2^{n-10}\right].
\end{equation*}

Simple computations show that the minimal elements of the sets $\mathcal{S}_{6}^{2}$ and $\mathcal{S}_{6}^{4}$, for $n\ge 8$
 are the same as before.
 Indeed, the minimal element of the set $\mathcal{S}_{6}^{1}$  belongs to  both sets $\mathcal{S}_{6}^{2}$ and $\mathcal{S}_{6}^{4}$ and it is impossible to find an element of  the sets  minorizing  it.

For the sets $\mathcal{S}_{6}^{3}$ and $\mathcal{S}_{6}^{5}$ and $n \ge 8$,  we cannot apply the same argument  as above since the minimal element of the set $\mathcal{S}_{6}^{1}$ does not  belong to $ \mathcal{S}_{6}^{3}$ and $\mathcal{S}_{6}^{5}$.

For the first case, by Theorem \ref{th:minimo} we get $k=5, d=0, \rho=\left (\frac{2n-8}{n-5}\right )^{n-5}.$ After computing the minimal vector with  integer components we get
\begin{equation*}
\mathbf{x_{\ast }}(\mathcal{S}_{6}^{3})=\left[ 4^3,3^4, 2^{n-7}\right]
\end{equation*}
Notice that $\left[ 3^{10},2^{n-10}\right]   \trianglelefteq \left[ 4^3,3^4, 2^{n-7}\right]$ .

For the second case,  by Theorem \ref{th:minimo} we get $k=6, d=0, \rho=2.$ Thus the minimal element is
\begin{equation*}
\mathbf{x_{\ast }}(\mathcal{S}_{6}^{5})=\left[ 4^5, 2^{n-5}\right]
\end{equation*}
Notice that $\left[ 3^{10},2^{n-10}\right]   \trianglelefteq \left[ 4^5,  2^{n-5}\right]$.

Thus in case of hexacyclic graphs we have one minimal degree sequence which corresponds, for $n=10$,  to the graph $G_{*}(\mathcal{S}_6^1)$ in Figure \ref{fig:2} drawing for the sake of simplicity for $n=11$.

Finally,  for $n=5$ there is only one hexacyclic  graph associated to the degree sequence $[4^5]$.

For $n=6$ we get as maximal elements of  $\mathcal{S}_{6}^{3}$ and $\mathcal{S}_{6}^{5}$ the sequences $[5^2,4,  3^2,2]$ and $[5,4^4,1]$ which are non comparable with respect to the majorization order, while for $\mathcal{S}_{6}^{4}$  we get the sequence $[5^2,3^4]$ majorized by   $[5^2,4,  3^2,2]$.

The minimal element of  $\mathcal{S}_{6}^{3}$ and $\mathcal{S}_{6}^{4}$ is  $[4^4,3^2]$ while the minimal element of  $\mathcal{S}_{6}^{5}$  is $[4^5,2]$ which is minorized by $[4^4,3^2]$.

For $n=7$ we get as maximal elements of $\mathcal{S}_{6}^{2}$ , $\mathcal{S}_{6}^{3}$ and $\mathcal{S}_{6}^{5}$ the sequences $[6^2,3^2,2^3]$, $[6,5,4,  3^2,2,1]$ and $[6,4^4,1,1]$ which are non comparable with respect to the majorization order, while for $\mathcal{S}_{6}^{4}$  we get the sequence $[6,5,3^4,1]$ majorized by $[6,5,4,  3^2,2,1]$.

The minimal element of  $\mathcal{S}_{6}^{2}$, $\mathcal{S}_{6}^{3}$ and $\mathcal{S}_{6}^{4}$ is  $[4^3,3^4]$ while the minimal element of  $\mathcal{S}_{6}^{5}$  is $[4^5,2^2]$ which is minorized by $[4^3,3^4]$.

\end{itemize}
\newpage
In the following table we summarize  the maximal and minimal degree sequences of $c$-cyclic graphs for $0 \le c \le 6$ and $n \ge c+2$.

\begin{table}[!h]
\centering
\label{tab:Table}
\renewcommand\arraystretch{1.5}
\begin{tabular}{|r| l| l|}
\hline \textbf{ c }&\textbf{Maximal degree sequences } &\textbf{Minimal degree sequences } \\ \hline
0 & $[(n-1),1^{n-1}]$ & $[2^{n-2},1^2]$ \\ \hline
1  & $[(n-1), 2^2, 1^{n-3}]$ & $[2^{n}]$  \\ \hline
2  & $[(n-1),3, 2^2, 1^{n-4}]$ & $[3^2, 2^{n-2}]$  \\ \hline
3  & $[(n-1),4, 2^3, 1^{n-5}], \,\, [(n-1),3^3,1^{n-4}]$ & $[3^4, 2^{n-4}]$  \\ \hline
4 & $[(n-1),5, 2^4, 1^{n-6}], \,\, [(n-1),4, 3^2,2, 1^{n-5}] $ & $[3^6, 2^{n-6}]$  \\ \hline
\multirow{2}{*}{5} & $[(n-1),6, 2^5, 1^{n-7}], \,\, [(n-1),5, 3^2,2^2, 1^{n-6}], \,\,$ & $[4,3^6]\,\,  (n=7), \,\,  [3^8, 2^{n-8}]\,\, (n \ge 8)$ \\
                 & $[(n-1), 4^2,3^2,1^{n-5}]$ &  \\ \hline
\multirow{2}{*}{6} & $[(n-1),7, 2^6,1^{n-8}], \,\, [(n-1),6, 3^2,2^3,1^{n-7}],\,\, $ & $[4^2,3^6] \,\, (n=8),\,\, [4,3^8] \,\, (n=9), \,\,$  \\
                  & $[(n-1),5,4,3^2,2,1^{n-6}],\,\, [(n-1),4^4,1^{n-5}] $ & $[3^{10}, 2^{n-10}]\,\, (n \ge 10)$ \\ \hline
\end{tabular}
\caption{Extremal degree sequences}
\end{table}

\begin{remark}
We observe that the maximal and minimal degree sequences of c-cyclic graphs, for $0 \le c \le 6$ and $n \ge c+2$, can be expressed in terms  of the cyclomatic number c. In fact
\begin{itemize}
\item For every $c \ge 0$  there is the maximal element
$$
[(n-1), (c+1), 2^c, 1^{n-c-2}]
$$
\item For $c \ge 3$ there is a second  maximal element non comparable with the previous one:
$$
[(n-1), c, 3^2, 2^{c-3}, 1^{n-c-1}]
$$
\item For $c \ge 5$ there is a third  maximal element non comparable with the previous ones:
$$
[(n-1), c-1, 4, 3^2, 2^{c-5}, 1^{n-c}]
$$
\item For every c such that $2c-2 \le n$,  the minimal element is
$$
[3^{2c-2},2^{n-2c+2}]
$$
\end{itemize}
One may conjecture that these properties  hold also  for $c>6$.

\end{remark}

\section{Upper and lower bounds of degree-based topological indices.}

In this Section we  discuss  upper and lower bounds for degree-base topological indices  for $c$-cyclic graphs, $0 \le c \le 6$, via majorization techniques. To this aim, we will apply the characterizations of $c$-cyclic graphs provided in Theorem \ref{Schocker1} and the extremal elements computed in Section 4.

We recall that a symmetric
function $\phi: A\rightarrow
\mathbb{R}$, $A\subseteq
\mathbb{R}^{n}$, is said to be Schur-convex on $A$ if $\mathbf{x}\trianglelefteq
\mathbf{y}$ implies $\phi (\mathbf{x})\leq $ $\phi (\mathbf{y})$.
\noindent Given an interval $I\subset
\mathbb{R}
$, and a  convex function $g:I\rightarrow
\mathbb{R}
$, the function $\phi (\mathbf{x})=\sum_{i=1}^{n}g(x_{i})$ is
Schur-convex on $I^{n}=\underbrace{I\times I \times \cdots \times I}_{n-
times}$.

\noindent If $\phi$ is a Schur-convex function and $S$ is a subset of $\Sigma_a$ which admits maximal and minimal elements with respect to the majorization order,  the solutions of the  constrained optimization problems
\begin{equation}  \label{problema P}
\begin{cases}
\max \,\,\,(\min )\text{ }\phi (\mathbf{x})\tag{P} \\
\text{subject to }\mathbf{x}\in S
\end{cases}
\end{equation}
are the maximal element $x^*(S)$ and the minimal element $x_*(S)$, respectively. On the other hand, when $\phi$ is a Schur-concave function, the solutions to problem \eqref{problema P} are the minimal element $x_*(S)$ and the maximal element $x^*(S)$, respectively.

Let now $F(d_1,d_2, \cdots, d_n)$ be any topological index which is a Schur-convex  function of its arguments,  defined on a subset $S \subseteq \Sigma_a$. Then by the order preserving property of the Schur-convex functions, we get
$$
F(x_*(S)) \le F(d_1,d_2, \cdots, d_n) \le F(x^*(S))
$$
Analogously, if $F$ is a Schur-concave function, then
$$
F(x^*(S)) \le F(d_1,d_2, \cdots, d_n) \le F(x_*(S)).
$$
A class of topological indices,  of particular interest found in the literature and depending on the degree sequence of a graph over all vertices,   are the {\it first general Zagreb indices} defined as
\begin{equation} \label{eq:indice1}
M^{\alpha}_1 = \sum_{i=1}^n d_i^{\alpha}
\end{equation}
where $\alpha$ is an arbitrary real number except $0$ and $1$ (see \cite{LiZheng}). For $\alpha=2$ we get the {\it first Zagreb index} while for $\alpha=-1$ the {\it inverse degree}. Notice that $M^{\alpha}_1$ is a Schur convex function of the degree sequence either for $\alpha <0$ or $\alpha >1$ while it is a Schur concave function of the degree sequence for $0<\alpha <1$. Also the {\it first multiplicative Zagreb index}
\begin{equation} \label{eq:indice2}
\ln M_1= 2\sum_{i=1}^n \ln (d_i)
\end{equation}
introduced by Gutman in \cite{Gutman2011}, is a Schur concave function of the degree sequence.

For  $c$-cyclic graphs, $0\le c \le 6$,  the upper and lower bounds of   indices \eqref{eq:indice1}, \eqref{eq:indice2}, and in general of any index which is a Schur convex/concave function of the degree sequence, can be found throughout the solution of (P) where $S$ in any of the subsets $\mathcal{S}_i^j$ described in Section 4. From these arguments  the upper and lower bounds in Table 2 follow.
For convenience we restrict our analysis to the first general Zagreb index with either $\alpha <0$ or $\alpha >1$ and $n \ge c+2$. In the case $0<\alpha <1$ the upper and lower bounds are turned over. Bounds for the first multiplicative Zagreb index can be obtained analogously.

\begin{table}[!h]
\centering
\label{tab:Table2}
\renewcommand\arraystretch{1.5}
\begin{tabular}{|l|l|l|}
\hline
$\mathbf{c}$ & \textbf{Lower bounds} & \textbf{Upper bounds} \\ \hline
$1$ & $(n-1)^{\alpha }+2^{\alpha +1}+(n-3)$ & $ n\left(
2^{\alpha }\right) $ \\ \hline
$2$ & $(n-1)^{\alpha }+3^{\alpha }+2^{\alpha +1}+(n-4)$ &
$2\left( 3^{\alpha }\right) +\left( 2^{\alpha }\right) \left( n-2\right) $
\\ \hline
\multirow{2}{*}{$3$} & \multirow{2}{*}{$4\left( 3^{\alpha }\right) +\left( 2^{\alpha }\right) \left( n-4\right) $}
& $(n-1)^{\alpha }+4^{\alpha }+3\left( 2^{\alpha }\right) +(n-5)$ \\

 & & $(n-1)^{\alpha }+3^{\alpha +1}+\left( n-4\right)$

\\ \hline
\multirow{2}{*}{$4$} & \multirow{2}{*}{$2\left( 3^{\alpha +1}\right) +\left(
2^{\alpha }\right) \left( n-6\right) $}

& $(n-1)^{\alpha }+5^{\alpha }+2^{\alpha +2}+\left( n-6\right)$ \\

 & & $(n-1)^{\alpha }+2^{\alpha }({2^\alpha +1 })+2\left( 3^{\alpha }\right) +\left(
n-5\right)$
\\ \hline
\multirow{3}{*}{$5$} & \multirow{3}{*}{$ 8\left( 3^{\alpha }\right) +\left( n-8\right) 2^{\alpha }$}
&
$(n-1)^{\alpha }+6^{\alpha }+5\left( 2^{\alpha }\right) +(n-7) $ \\
& &  $(n-1)^{\alpha }+5^{\alpha }+2\left( 3^{\alpha }\right) +2\left( 2^{\alpha
}\right) +(n-6)$\\
& & $ (n-1)^{\alpha }+2^{2\alpha +1}+2\left( 3^{\alpha }\right) +(n-8) $

\\ \hline
\multirow{4}{*}{$6$} & \multirow{4}{*} {$ 10\left(3^{\alpha }\right) +\left( n-10\right) 2^{\alpha }$}
&
$(n-1)^{\alpha }+7+6\left( 2^{\alpha }\right) (n-8)$  \\

& & $ (n-1)^{\alpha }+6+2\left( 3^{\alpha }\right) +3\left( 2^{\alpha }\right)
+(n-7)$  \\
& & $ (n-1)^{\alpha }+5+4+2\left( 3^{\alpha }\right) +2+(n-6)$ \\
& & $ (n-1)^{\alpha }+2^{2\alpha+2}+(n-5) $
\\ \hline
\end{tabular}
\caption{Bounds for $M^{\alpha}_1$ ($\alpha <0 \vee \alpha >1$)}
\end{table}

Note that in the case of bicyclic graphs, for the  bounds of $M_1^{\alpha}$ we recover the same results as in \cite{Zhang2006}, Theorems 1, 5, 7 and 8.

Notice that when more maximal elements are identified, the best choice depends on $\alpha.$ We discuss in detail the case $\alpha=-1$, which gives  the \emph{inverse degree} $\rho
=M_{1}^{-1}=\sum {\frac{1}{{d_{j}}}}$ (see \cite{LiShi}, \cite{Danke1} and \cite{Danke2}).

Let $G$ be a $c-$cyclic graph $(0\leq
c\leq 6)$.

\begin{itemize}
\item {Tree }$\ {(}c=0)${:}
\[
\dfrac{n+2}{2}\leq \rho \leq (n-1)+\dfrac{1}{n-1},
\]

where the lower and the upper bounds are attained if and only if $G=P_{n}$
and $G=S_{n}$ respectively, being $P_{n}$ and $S_{n}$ the path and the star
with $n$ vertices.

\item {Unicyclic } $(c=1)${:}

\[
\dfrac{n}{2}\leq \rho \leq (n-2)+\dfrac{1}{n-1},
\]
where the lower and the upper bounds are attained if and only if $G=C_{n}$
and $G=S_{n}^{+}$ respectively, being $C_{n}$ the cycle and $S_{n}^{+}$ the
graph obtained from $S_{n}$ joining two pendant vertices with an edge.

It is worth to point out that for trees and unicyclic graphs we recover the
same results as in \cite{LiShi}, Theorems 3 and 4, respectively.

\item {Bicyclic }$(c=2)${: }
\[
\dfrac{n-2}{2}+\dfrac{2}{3}\leq \rho \leq (n-3)+\dfrac{1}{n-1}+\dfrac{1}{3},
\]%
where the upper bound is attained by the graphs $H_{n}^{1}$ in \cite%
{Zhang2006}, the lower by the graph in the class $G_{n}^{1}$ in \cite%
{Zhang2006}.

For $c\ge 3$ we take into account the upper bounds corresponding to the
largest values that can be attained. We consider the case $n \ge c+2$.

\item {Tricyclic }$(c=3)${: }

\[
\dfrac{n-4}{2}+\dfrac{4}{3}\leq \rho \leq (n-3)+\dfrac{1}{n-1}.
\]

The graphs $G_{*}(\mathcal{S}_3^1)$ in Figure 2 and $G^{*}(\mathcal{S}_3^2)$ in Figure 1 achieve, for $n=8,$  the
lower and the upper bounds, respectively.

\item {Tetracyclic }$(c=4)${: }

\[
\dfrac{n-2}{2}\leq \rho \leq (n-5)+\dfrac{1}{n-1}+\dfrac{17}{12}.
\]

The graphs $G_{*}(\mathcal{S}_4^1)$ in Figure 2 and $G^{*}(\mathcal{S}_4^2)$ in Figure 1 achieve, for $n=8,$ the
lower and the upper bounds, respectively.

\item {Pentacyclic }$(c=5)${: }
\[
\dfrac{n-8}{2}+\dfrac{8}{3}\leq \rho \leq (n-5)+\dfrac{1}{n-1}+\dfrac{7}{6}.
\]

The graphs $G_{*}(\mathcal{S}_5^1)$ in Figure 2 and $G^{*}(\mathcal{S}_5^2)$ in Figure 1 achieve, for $n=9,$  the
lower and the upper bounds, respectively.

\item Hexacyclic $(c=6)$:
\[
\dfrac{n-10}{2}+\dfrac{10}{3}
\leq \rho \leq (n-4)+\dfrac{1}{n-1}.
\]

The graphs $G_{*}(\mathcal{S}_6^1)$ in Figure 2 and $G^{*}(\mathcal{S}_6^5)$ in Figure 1 achieve, for $n=11,$  the
lower and the upper bounds, respectively.

We observe that for $c \ge 3 $ if the additional information $d_{c+2}\geq 2$ is known, the upper bound can be improved  yielding:
\[
\rho \leq (n-c)+\dfrac{1}{n-1}+\dfrac{c^{2}-3c-2}{2(c+1)}
\]
which is  attained by the graphs $G^{*}(\mathcal{S}_c^1)$.

\end{itemize}

\section{Conclusion}
In this paper we focus on lower and upper bounds \ of some relevant graph
topological indices, based on the degree sequence over all vertices of the
graph.  We get our results  through new characterizations of $c$-cyclic
graphs aimed to identify  extremal vectors with respect to the majorization
order of particular subset of $%
\mathbb{R}
^{n}.$ We have shown that classical bounds  can be recovered and new ones
can be obtained.  Our results suggest that as well as the generalization of Theorem 5 to $c >6$ can be found,
 our theoretical approach can be extended also to a
larger class of $c$-cyclic graphs.  Finally, other topological indices of $c$%
-cyclic graphs can be provided  whenever they can be expressed as
Schur-convex or Schur-concave functions of the degree sequence of the graph.

\newpage

\begin{figure}[!h]
\centering
\fboxrule=2pt
\subfigure[\scriptsize{$G^{*}(\mathcal{S}_3^1)=${$[7,4,2^3,1^3]$}}]{\includegraphics[height=4.5cm]{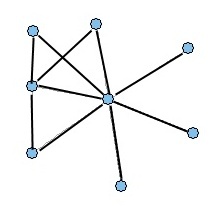}}
\subfigure[\scriptsize{$G^{*}(\mathcal{S}_3^2)=${$[7,3^3,1^4]$}}]{\includegraphics[height=4.5cm]{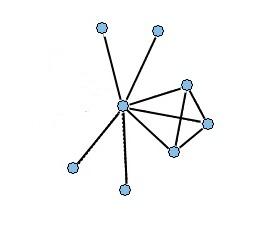}}
\subfigure[\scriptsize{$G^{*}(\mathcal{S}_4^1)=${$[7,5,2^4,1^2]$}}]{\includegraphics[height=4.5cm]{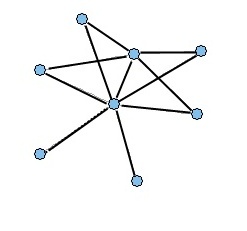}}
\subfigure[\scriptsize{$G^{*}(\mathcal{S}_4^2)=${$[7,4,3^2,2,1^3]$}}]{\includegraphics[height=4.5cm]{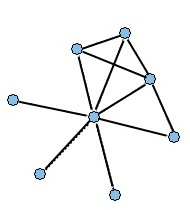}}
\subfigure[\scriptsize{$G^{*}(\mathcal{S}_5^1)=${$[8,6,2^5,1^2,]$}}]{\includegraphics[height=4.5cm]{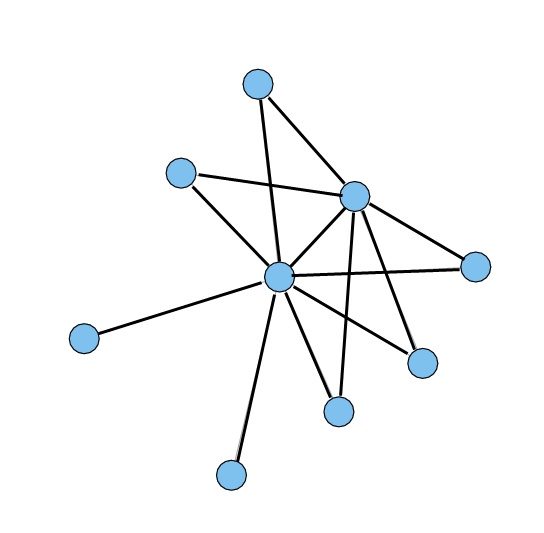}}
\subfigure[\scriptsize{$G^{*}(\mathcal{S}_5^2)=${$[8,5,3^2,2^2,1^3]$}}]{\includegraphics[height=4.5cm]{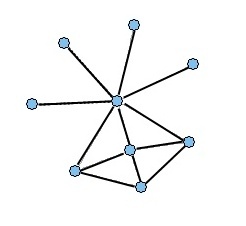}}
\subfigure[\scriptsize{$G^{*}(\mathcal{S}_5^3)=${$[8,4^2,3^2,1^4]$}}]{\includegraphics[height=4.5cm]{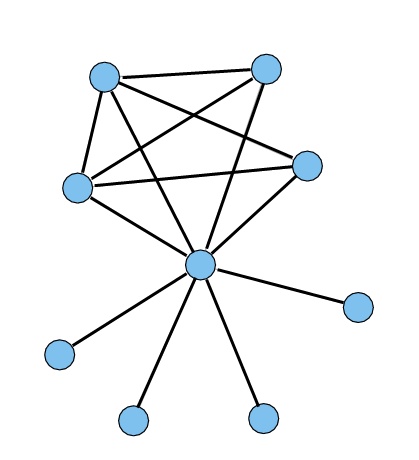}}
\subfigure[\scriptsize{$G^{*}(\mathcal{S}_6^1)=${$[10,7,2^6,1^3]$}}]{\includegraphics[height=4.5cm]{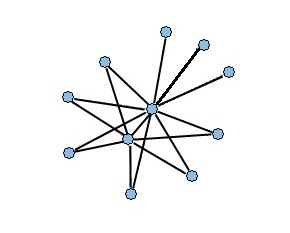}}
\subfigure[\scriptsize{$G^{*}(\mathcal{S}_6^2)=${$[10,6,3^2,2^3,1^4]$}}]{\includegraphics[height=4.5cm]{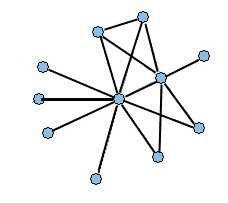}}
\subfigure[\scriptsize{$G^{*}(\mathcal{S}_6^3)=${$[10,5,4,3^2,2,1^5]$}}]{\includegraphics[height=4.5cm]{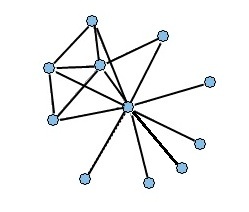}}
\subfigure[\scriptsize{$G^{*}(\mathcal{S}_6^5)=${$[10,4^4,1^6]$}}]{\includegraphics[height=4.5cm]{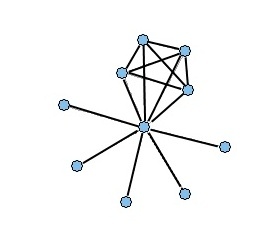}}
\caption{Maximal degree sequence graphs}
\label{fig:1}
\end{figure}

\newpage

\begin{figure}[!h]
\centering
\subfigure[\scriptsize{$G_{*}(\mathcal{S}_3^1)=${$[3^4,2^4]$}}]{\includegraphics[height=4.5cm]{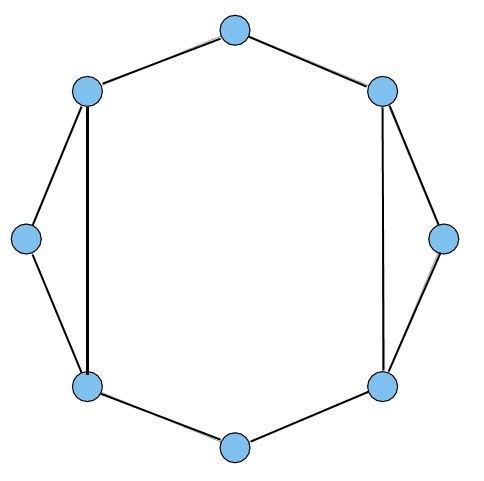}}
\subfigure[\scriptsize{$G_{*}(\mathcal{S}_4^1)=${$[3^6,2^2]$}}]{\includegraphics[height=4.5cm]{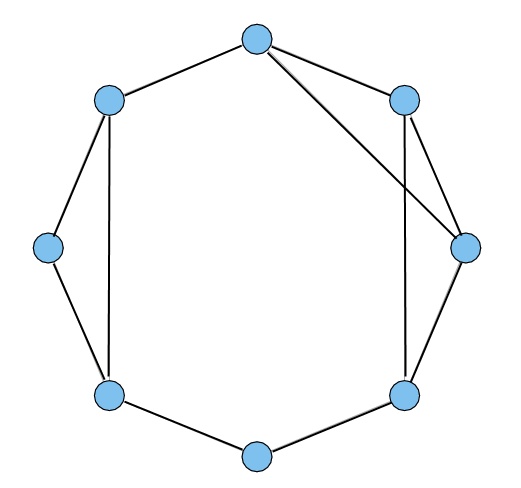}}
\subfigure[\scriptsize{$G_{*}(\mathcal{S}_5^1)=${$[3^8,2]$}}]{\includegraphics[height=4.5cm]{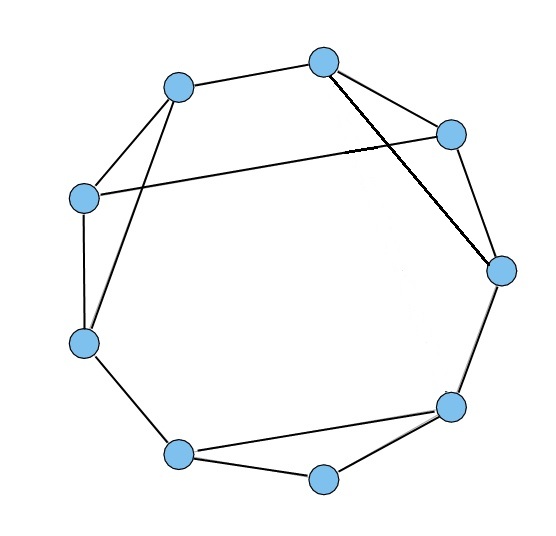}}
\subfigure[\scriptsize{$G_{*}(\mathcal{S}_6^1)=${$[3^{10},2]$}}]{\includegraphics[height=4cm]{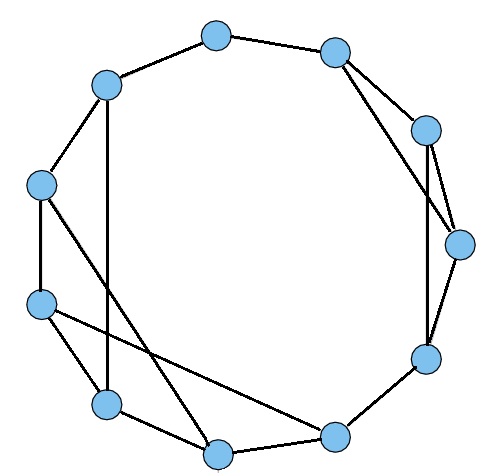}}
\caption{Minimal degree sequence graphs}
\label{fig:2}
\end{figure}

\newpage

\end{document}